\DeclareMathOperator{\spec}{Spec}
\DeclareMathOperator{\gal}{Gal}
\DeclareMathOperator{\aut}{Aut}
\DeclareMathOperator{\ord}{ord}
\DeclareMathOperator{\gl}{GL}
\DeclareMathOperator{\pr}{pr}
\DeclareMathOperator{\h}{H}
\DeclareMathOperator{\reg}{Reg}
\theoremstyle{plain}
\newtheorem{thm}{Theorem}[section]
\newtheorem{theorem}[thm]{Theorem}
\newtheorem{lemma}[thm]{Lemma}
\newtheorem{corollary}[thm]{Corollary}
\newtheorem{proposition}[thm]{Proposition}
\theoremstyle{definition}
\newtheorem{definition}[thm]{Definition}
\newtheorem{example}[thm]{Example}
\numberwithin{equation}{thm}
\newcommand{\sF}{{\mathcal F}}
\newcommand{\sL}{{\mathcal L}}
\newcommand{\sM}{{\mathcal M}}
\newcommand{\sN}{{\mathcal N}}
\newcommand{\sO}{{\mathcal O}}
\newcommand{\C}{{\mathbb C}}
\newcommand{\N}{{\mathbb N}}
\renewcommand{\P}{{\mathbb P}}
\newcommand{\Z}{{\mathbb Z}}
\newcommand{\codim}{{\rm codim}}
\begin{document}
\title{on metabelian galois coverings of complex algebraic varieties}
\author{Abolfazl Mohajer}
\address{Universit\"{a}t Mainz, Fachbereich 08, Institut f\"ur Mathematik, 55099 Mainz, Germany}
\email{mohajer@uni-mainz.de}
\subjclass{14A10, 14A15, 14E20, 14E22}
\keywords{Algebraic variety, Galois covering, ramified covering}
\maketitle
\begin{abstract}
Let $f:X\to Y$ be a finite ramified Galois covering of algebraic varieties defined over the complex numbers. In this paper, we prove some structure theorems for such coverings in the case that the non-abelian Galois group of the cover is metabelian (in particular metacyclic). Our results extend the previous ones, obtained by several authors, in the case of abelian and dihedral Galois coverings. In analogy with the abelian (and generalizing the dihedral) case, we find "building data" of metabelian covers from which the cover can be reconstructed. In addition to analyzing several examples, we compute invariants and eigensheaves of the group actions of the coverings under study with some applications.
\end{abstract}	
\section{Introduction}
The theory of branched (or ramified) coverings has its origins in continuation of analytic functions and the attempts to find maximal analytic continuations of a given function. Taking complex root functions such as $f(z)=\sqrt{z}$ shows that they are multi-valued in certain subdomains of the complex plane, so when trying to continue along the closed curve one might arrive at another branch of the multivalued function, not the original one, so the values do not match. The idea of Riemann surfaces offers a geometric way to deal with this problem by introducing a Riemann surface as the natural domain of $\sqrt{z}$ and similar functions. One can picture the Riemann surface for $\sqrt{z}$ as two sheets coming together at $z=0$. As such, it can be considered as a covering $\P^1\to \P^1$ of degree 2 branched over $0$ and $\infty$. \par 
The Riemann surfaces appeared in Bernhard Riemann's inaugural thesis in 1851 and are considered the first examples of covering spaces. Riemann's breakthrough, the \emph{Riemann existence theorem} asserts that such coverings of (compact) Riemann surfaces are classified by the so-called (monodromy) permutation representation of the complement of the branch points. \par
The study of Galois coverings of algebraic varieties, i.e. maps $\pi:X\to Y$ such that $\pi$ exhibits $Y$ as a quotient $X/G$ for a finite group $G$ has since then been an interesting and fruitful subject with many applications in algebraic geometry. This is equaivalent to solving equations with the coefficients in the function filed of the base variety $Y$. As some applications, one can point out the construction of algebraic surfaces with a  prescribed  Chern  invariants, see \cite{Per81}, or the use of abelian covers to inquire the existence of Shimura subvarieties in the moduli space of abelian varieties, see \cite{MZ}, \cite{M}. However, in most cases the Galois coverings were abelian, due to rich and well-known properties, such as simple representation theory or more basic equations, which made them as, as described in (\cite{Tok02}), "quite user-friendly". \par 
According to \cite{CP}, Comessatti \cite{Com30} was the first to study Galois covers with abelian Galois group $G$ and their topological properties. In \cite{P1}, Pardini described the abelian Galois covers of non-singular algebraic varieties over the complex numbers (although her methods can be extended to any algebraically closed field of characteristic coprime to $n$). The analysis in this case uses the decomposition of $\pi_*\sO_X=\bigoplus\limits_{\chi\in G^{*}}\sL_{\chi}$ and the $\sO_X[G]$-sheaves $\sL_{\chi}$ are invertible. The cover is then determined by the so-called \emph{building data}, a collection of data $\{\sL_{\rho}\otimes \sL_{\chi^{\prime}}\to \sL_{\chi\chi^{\prime}} \}_{\chi, \chi^{\prime}}$ (or equivalently a collection of global sections $\{\sL_{\chi}^{-1}\otimes \sL_{\chi^{\prime}}^{-1}\otimes \sL_{\chi\chi^{\prime}} \}_{\chi, \chi^{\prime}}$) which endows $\bigoplus\limits_{\chi\in G^{*}}\sL_{\chi}$  with the structure of an $\sO_Y$-algebra constructing the cover as $\pi:\spec(\bigoplus\limits_{\chi\in G^{*}}\sL_{\chi})\to Y$. \par
The above mentioned results motivate the study of non-abelian Galois covers using such $\sO_X[G]$-sheaves to construct building data for the cover. The non-abelian Galois covers are, at least in comparison with the abelian case, relatively unexplored. The studies are mostly sporadic and dedicated to elementary and small non-abelian groups, see for example \cite{Tok02} or \cite{Ea11}. More recently, F. Catanese and F. Perroni described the algebro-geometric properties of the dihedral Galois covers of algebraic varieties in \cite{CP} using the theory of cyclic covers from \cite{C10} to construct building data for the dihedral covers, see also \cite{CLP}.\par
One of the most useful features of the dihedral group of which the analysis in \cite{CP} takes advantage is that $D_n$ is a \emph{metacyclic group}. Namely, $D_n$ sits in the following short exact sequence of groups 
\[0\to\Z/n\Z\to D_n\to \Z/2\Z\to 0\]
The two maps above give a corresponding factorization $\pi \colon X\xrightarrow{p}Z\xrightarrow{q} Y$, where $p,q$ are cyclic covers and hence the theory in
\cite{C10} or \cite{P1} can be applied. This motivates considering the metacyclic or more generally \emph{metabelian} covers (see Definition \ref{defmetabelian}) and trying to generalize the results of \cite{CP} by technics which are available for abelian covers. This is the aim of the present paper. Using the complex representation theory of metabelian and metacyclic groups, we also prove some results about the eigenspaces of the action of the group on direct image sheaf $\pi_*\mathcal{O}_X$ and the direct summands of this action. Furthermore, we describe the function fields of metacyclic covers. We also explore several examples of metabelian covers with their properties.
	
\section{Structure of abelian Galois coverings} \label{abeliancovers}
In this section  we describe the building data of an abelian cover and the construction of the cover using these data and the relations among them. Furthermore, we prove some extensions of the results of \cite{CP} and \cite{P1} to the normal varieties using theory of reflexive sheaves. Notations come mostly from \cite{C10} and \cite{P1}.  \par
Let $W$ be a smooth complex algebraic variety and $V$ a normal one and $f\colon V\to W$ be an abelian Galois covering. By this we mean precisely that there exists a finite abelian group $G$ together with a faithful action of $G$, such that $f$ realizes $W$ as the quotient of $V$ by $G$. Such Galois coverings of algebraic varieties have been studied extensively, especially in (\cite{C10} and \cite{P1}). \par
Our assumptions imply that there is a decomposition $f_*\mathcal{O}_V=\bigoplus\limits_{\chi \in G^*} L^{-1}_{\chi}$, where each $L_{\chi}$ is an invertible sheaf on $W$ on which $G$ acts by character $\chi$. So in particular, the invariant summand $L_1$ is ismorphic to $\mathcal{O}_W$. The algebra structure on $f_*\mathcal{O}_V$ is given by the ($\mathcal{O}_W$-linear) muliplication rule $\mu_{\chi\chi^{\prime}}:L^{-1}_{\chi}\otimes L^{-1}_{\chi^{\prime}}\to L^{-1}_{\chi\chi^{\prime}}$ and compatible with the action of $G$.  Consider the ramification and branch divisors $R,D$ of $f$. Note that $R$ consists precisely of those points with non-trivial stabilizers  in $V$ under the action of $G$. Furthermore under our assumptions $f$ is flat and so $D$ is a Cartier divisor and $R$ is $\mathbb{Q}$-Cartier. For every component $T$ of $R$, the subgroup of $G$ fixing elements of $T$ pointwise is a cyclic subgroup $H_T$, its so-called \emph{inertia group}. Also, if $D_i$
is an irreducible component of the branch locus $D$, then all of the elements of $f^{-1}(D_i)$ have the same inertia group which we denote by $H_i$. The order $m_i$ of $H_i$ is the ramification order of $f$ over $D_i$ and the representation of $H_i$ obtained by taking differentials and restricting to the normal space to $D_i$ is the faithful character $\chi_i$. \par
Let $D_1,\cdots, D_r$ be the irreducible components of $D$. The choice of a primitive $m$-th root of unity $\xi$ amounts to giving a map $\{1,\cdots, r\}\to G$, the image $g_i$ of $i$ under which is the generator of the cyclic group $H_i$ that is sent to $\xi^{m/m_i}=\xi_{m_i}$ by $\chi_i$. The line bundles $L_{\chi}$ and divisors $D_i$ each labelled with an element $g_i$ as described above are called the building data of the cover. These data are to satisfy the so-called \emph{fundamental relations} and
determine the cover $f:V\to W$ up to deck automorphisms. Let us write these relations down. For $i=1,\cdots, r$ and $\chi\in G^*$, let $a^{i}_{\chi}$ be the smallest positive integer such that $\chi(g_i)=\zeta^{ma^{i}_{\chi}/m_i}$. For any two characters
$\chi, \chi^{\prime}$, $0\leq a^{i}_{\chi}+a^{i}_{\chi^{\prime}}<2m_i$, so
\[\epsilon^{i}_{\chi,\chi^{\prime}}=[\frac{a^{i}_{\chi}+a^{i}_{\chi^{\prime}}}{m_i}]=
\begin{cases}
1& a^{i}_{\chi}+a^{i}_{\chi^{\prime}}\geq m_i\\
0, & a^{i}_{\chi}+a^{i}_{\chi^{\prime}}< m_i
\end{cases}\]
and we set $D_{\chi,\chi^{\prime}}=\sum\limits_{i=1}^{r} \epsilon^{i}_{\chi,\chi^{\prime}}D_i$. Then, the fundamental
relations of the cover are the following:
\begin{align} \label{fundamentalrel}
L_{\chi}+L_{\chi^{\prime}}\equiv L_{\chi\chi^{\prime}}+D_{\chi, \chi^{\prime}}.
\end{align}
In particular, if $\chi^{\prime}=\chi^{-1}$, then 
\begin{align} \label{fundamentalrelinverse}
L_{\chi}+L_{\chi^{-1}}\equiv D_{\chi,\chi^{-1}},
\end{align}
and $D_{\chi,\chi^{-1}}$ the sum of the components $D_i$, where $\chi(g_i)\neq 1$. The cover $f:V\to W$ can be recovered from the fundamental
relations \ref{fundamentalrel} by first defining the variety $X$
inside the vector bundle $\sL=\oplus_{\chi\neq 1} L_{\chi}$ by the
equations
\begin{equation} \label{fundamental eqs}
z_{\chi}z_{\chi^{\prime}}=(\prod_i s_i^{\epsilon^{i}_{\chi,\chi^{\prime}}})z_{\chi\chi^{\prime}}
\end{equation}
where $z_{\chi}$ is the fiber coordinate of the bundle $L_{\chi}$ which can also be viewed as the tautological section of pull-back of the bundle $L_{\chi}$ to $\sL$ and $s_i\in H^0(W,\sO_W(D_i))$ is the (pull-back to $\sL$ of the) defining equation for $D_i$ for $i=1,\cdots, r$. This is naturally a $W$-scheme and is flat over $W$. Conversely, for every choice of the sections $s_i$, equations \ref{fundamental eqs} define a scheme $V$ flat over $W$ which is smooth if and only if each $D_i$ is smooth, the union $\cup D_i$ has at most normal crossing singularitires and at any intersection points $\cap D_{i_l}$, the product $\prod H_{i_l}$ injects into $G$. We
therefore have the following fundamental theorem proven in \cite{P1}.
\begin{theorem} \label{abelian cover}
Let $G$ be a finite abelian group. Let $V$ be a normal algebraic variety
and $W$ a smooth one and let $f\colon V\to W$ be an abelian cover
with Galois group $G$. With the notations as above, the following set of
linear equivalences hold.
\begin{equation} \label{fundamentalrel1}
L_{\chi}+L_{\chi^{\prime}}\equiv L_{\chi\chi^{\prime}}+ D_{\chi,
\chi^{\prime}} \text{     } \forall \chi, \chi^{\prime}\in G^*.
\end{equation}
Conversely, a set of data $\{L_{\chi}\}_{\chi \in G^*}, \{D_{\chi,
\chi^{\prime}}\}$ consisting respectively of invertible sheaves and reduced effective divisors on $W$ satisfying the relation \ref{fundamentalrel1}
determines an abelian cover. When $W$ is furthermore complete,
this abelian cover is unique.
\end{theorem}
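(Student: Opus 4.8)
\emph{Proof plan.} I would prove the two implications separately; in both, the heart of the matter is the local structure of tame cyclic covers in codimension one.

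\emph{Direct implication.} First I would use the given $G$-equivariant splitting $f_*\sO_V=\bigoplus_{\chi\in G^*}L_\chi^{-1}$ and the $\sO_W$-linear multiplication maps $\mu_{\chi,\chi'}\colon L_\chi^{-1}\otimes L_{\chi'}^{-1}\to L_{\chi\chi'}^{-1}$. Over $W\setminus D$ the morphism $f$ is étale, hence a $G$-torsor, and there each $\mu_{\chi,\chi'}$ is an isomorphism; consequently $\mu_{\chi,\chi'}$ is a nonzero morphism of invertible sheaves, so it identifies $L_\chi^{-1}\otimes L_{\chi'}^{-1}$ with $L_{\chi\chi'}^{-1}(-E_{\chi,\chi'})$ for a uniquely determined effective divisor $E_{\chi,\chi'}$ supported on $D$. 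To compute $E_{\chi,\chi'}$ I would localise at the generic point $\eta_i$ of a component $D_i$: there $\sO_{W,\eta_i}$ is a discrete valuation ring, a local equation $s_i$ of $D_i$ is a uniformiser because $D$ is reduced, and $f$ becomes a disjoint union of copies of the totally and tamely ramified cyclic extension $z^{m_i}=(\mathrm{unit})\cdot s_i$ with inertia $H_i=\langle g_i\rangle$. In this local picture the $\chi$-eigenmodule is freely generated by $z^{a^i_\chi}$, and $z^{a^i_\chi}\cdot z^{a^i_{\chi'}}=z^{a^i_\chi+a^i_{\chi'}}$ equals $(\mathrm{unit})\cdot s_i^{\epsilon^i_{\chi,\chi'}}\cdot z^{a^i_{\chi\chi'}}$ because $a^i_\chi+a^i_{\chi'}=m_i\,\epsilon^i_{\chi,\chi'}+a^i_{\chi\chi'}$. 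Hence $\ord_{D_i}E_{\chi,\chi'}=\epsilon^i_{\chi,\chi'}$, that is $E_{\chi,\chi'}=D_{\chi,\chi'}$ (in particular reduced), and passing to linear equivalence classes gives \eqref{fundamentalrel1}.

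\emph{Converse.} I would first read off from $\{D_{\chi,\chi'}\}$ its reduced components $D_i$ together with the labels $g_i$, so that $D_{\chi,\chi'}=\sum_i\epsilon^i_{\chi,\chi'}D_i$, and fix sections $s_i\in H^0(W,\sO_W(D_i))$ cutting out $D_i$. Using \eqref{fundamentalrel1} I would choose isomorphisms $L_\chi^{-1}\otimes L_{\chi'}^{-1}\xrightarrow{\ \sim\ }L_{\chi\chi'}^{-1}(-D_{\chi,\chi'})$ and compose them with multiplication by $\prod_i s_i^{\epsilon^i_{\chi,\chi'}}$ to get maps $\mu_{\chi,\chi'}\colon L_\chi^{-1}\otimes L_{\chi'}^{-1}\to L_{\chi\chi'}^{-1}$, thereby equipping $\sA=\bigoplus_{\chi\in G^*}L_\chi^{-1}$ with a multiplication; the candidate cover is $V:=\underline{\spec}_W\sA\to W$, which coincides with the subscheme of the total space of $\bigoplus_{\chi\ne1}L_\chi$ cut out by \eqref{fundamental eqs}. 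The delicate point — and the step I expect to be the main obstacle — is that $\sA$ be a genuine commutative \emph{associative} $\sO_W$-algebra. Commutativity is arranged by symmetrising the chosen isomorphisms. For associativity one observes that both triple products $L_\chi^{-1}\otimes L_{\chi'}^{-1}\otimes L_{\chi''}^{-1}\to L_{\chi\chi'\chi''}^{-1}$ equal an isomorphism followed by multiplication by $\prod_i s_i^{\,[(a^i_\chi+a^i_{\chi'}+a^i_{\chi''})/m_i]}$, thanks to the identity $\epsilon^i_{\chi,\chi'}+\epsilon^i_{\chi\chi',\chi''}=\epsilon^i_{\chi',\chi''}+\epsilon^i_{\chi,\chi'\chi''}=[(a^i_\chi+a^i_{\chi'}+a^i_{\chi''})/m_i]$; hence they differ by an element of $H^0(W,\sO_W^*)$, and one must choose the isomorphisms so that the resulting normalised $\sO_W^*$-valued $3$-cocycle on $G^*$ becomes a coboundary. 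My plan to dispose of this is to realise $\sA$ concretely as an iterated cyclic cover: writing $G^*\cong\prod_j\Z/d_j\Z$ with generators $\chi^{(j)}$, the relations \eqref{fundamentalrel1} restricted to powers of a single $\chi^{(j)}$ supply standard cyclic-cover data; $\sA$ is then a quotient of the symmetric algebra of $\bigoplus_j L_{\chi^{(j)}}^{-1}$ by explicit relations $w_j^{d_j}=(\mathrm{section})$, hence automatically associative as a quotient of a polynomial-type algebra, while the remaining relations \eqref{fundamentalrel1} identify its $G^*$-graded pieces with the $L_\chi^{-1}$. Equivalently, over an open cover trivialising all the $L_\chi$ the local models are manifestly associative and \eqref{fundamentalrel1} is exactly the \v{C}ech cocycle condition that glues them.

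\emph{Conclusion and uniqueness.} It then remains to check the structural properties: $f\colon V\to W$ is finite and flat since $\sA$ is locally free over $\sO_W$; the group $G=(G^*)^*$ acts on $V$ through the $G^*$-grading of $\sA$, this action is faithful and $(f_*\sO_V)^G=L_1^{-1}=\sO_W$, so $W=V/G$; and $V$ is normal because it is Cohen--Macaulay (flat over the regular variety $W$ with finite fibres) and regular in codimension one (over the generic point of each $D_i$ the model $z^{m_i}=(\mathrm{unit})\cdot s_i$ is an Eisenstein, hence regular, extension of a discrete valuation ring, using that $D_i$ is reduced). Finally, for the uniqueness claim, when $W$ is complete one has $H^0(W,\sO_W^*)=\C^*$, so the only remaining freedom in the construction is one scalar per pair $(\chi,\chi')$, which can be absorbed by rescaling the fibre coordinates $z_\chi$; hence any two abelian covers of $W$ with the same building data are isomorphic as $G$-covers over $W$.
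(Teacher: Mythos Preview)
The paper does not supply its own proof of this theorem: it explicitly attributes the result to \cite{P1} (Pardini), after the preceding paragraphs set up the notation and sketch the converse by writing down the equations \eqref{fundamental eqs} for $V$ inside the total space of $\sL=\bigoplus_{\chi\neq1}L_\chi$, asserting flatness, and stating the smoothness criterion. Your proposal is a correct and considerably more detailed reconstruction of precisely that standard argument: the local computation at the generic point of each $D_i$ for the direct implication, and the $\spec$-of-$\bigoplus L_\chi^{-1}$ construction for the converse, are exactly what underlies the paper's sketch and Pardini's proof. You go further than the paper's exposition in that you explicitly confront the associativity of the $\sO_W$-algebra structure (via the additive cocycle identity for the $\epsilon^i_{\chi,\chi'}$ and the reduced-building-data / iterated-cyclic-cover realisation, which is indeed Pardini's device and is alluded to later in \S\ref{eigenspaces}), verify normality of $V$ through Serre's criterion, and justify the uniqueness clause using $H^0(W,\sO_W^*)=\C^*$ for complete $W$; none of these points is spelled out in the paper itself.
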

Next, let $V$ be an irreducible normal variety but $W$ not necessarily smooth.
Note that since $W=V/G$, it follows that $W$ is also an irreducible normal variety.
Consider the smooth locus $W^0$ and set $V^0\coloneqq f^{-1}(W^0)$. Then
the cover $f^0\coloneqq f\mid_{V^0}\colon V^0\to W^0$ is an abelian cover
of the type that we described above. In particular, it is determined by the line bundles $(L^0_{\chi})_{\chi \in G^*}$ such that $f^0_{*}\mathcal{O}_{V^0}=\oplus_{\chi\in G^*} L^0_{\chi}$ and reduced effective divisors $(D^0_i)$ on $W^0$ without common components such that \ref{fundamentalrel2} holds. Let us denote the natural inclusions by $i:W^0\to W, \iota:V^0\to V$. We then have the following commutative diagram
\begin{equation} \label{normal diag}
\begin{tikzcd} 
V^0 \arrow{r}{\iota} \arrow[swap]{d}{f^0} & V \arrow{d}{f} \\
W^0 \arrow{r}{i} & W
\end{tikzcd}
\end{equation}
As $\mathcal{O}_V=\iota_{*}\mathcal{O}_{V^0}$ and $\mathcal{O}_W=i_{*}\mathcal{O}_{W^0}$, one obtains $f_{*}\mathcal{O}_{V}=(f\circ \iota)_*\mathcal{O}_{V^0}=i_*f^0_{*}\mathcal{O}_{V^0}=\oplus_{\chi\in G^*}i_{*}\mathcal{O}_{W^0}(L_{\chi})$. This motivates defining $\mathcal{F}_{\chi}=i_{*}\mathcal{O}_{W^0}(L_{\chi})$. Since $W$ is a normal variety and $L_{\chi}$ is a line bundle on $W^0$, it follows that $\mathcal{F}_{\chi}$ is a reflexive sheaf on $W$ and any such sheaf is uniquely determined by its restriction to $W^0$. The multiplication $\mu:f_{*}\mathcal{O}_V\otimes_{\mathcal{O}_W}
f_{*}\mathcal{O}_V\to f_{*}\mathcal{O}_V$ gives rise to $\mu_{\chi\chi^{\prime}}:\mathcal{F}_{\chi}\otimes \mathcal{F}_{\chi{\prime}}\to \mathcal{F}_{\chi\chi^{\prime}}$ whose restriction on $W^0$ is \ref{fundamentalrel1}:
 $\mathcal{O}_{W^0}(L_{\chi}+L_{\chi{\prime}})=\mathcal{O}_{W^0}(L_{\chi\chi^{\prime}}+
D^0_{\chi, \chi^{\prime}})$. Take $D_i=\overline{D^0_i}$ to be the
closure of the divisor $D^0_i$ mentioned above. Consequently, the
multiplication map is fully determined by its restriction to
$W^0$ and the following generalization of Theorem \ref{abelian cover} holds. We remark that this Theorem is proven (somewhat implicitely) for double covers of normal varieties in \cite{CP}, p.29 (a part on p.24) using a theory of double covers developed in \cite{CP}. Here we stablish the result for all abelian covers using instead a result of \cite{H80} which states that a reflexive rank 1 sheaf on a regular scheme is invertible.
\begin{theorem} \label{singular abelian}
Let $G$ be an abelian group. Let $V$ be a normal algebraic variety and let $f\colon V\to W$ be an abelian cover with the Galois group $G$. With the notations as above, there exist a collection of rank 1 reflexive sheaves $(\mathcal{F}_{\chi})_{\chi\in G^*}$ and
divisors $(D_i)$ on $W$ such that the following set of linear equivalences hold
\begin{equation} \label{fundamentalrel2}
\mathcal{F}_{\chi}+\mathcal{F}_{\chi^{\prime}}\equiv
\mathcal{F}_{\chi\chi^{\prime}}+ D_{\chi, \chi^{\prime}} \text{
 } \forall \chi, \chi^{\prime}\in G^*.
\end{equation}
Conversely, a set of data $\{\mathcal{F}_{\chi}\}_{\chi \in G^*},
\{D_{\chi, \chi^{\prime}}\}$ as above satisfying the relation
\ref{fundamentalrel2} determines an abelian cover.
\end{theorem}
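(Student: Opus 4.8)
The plan is to reduce everything to the smooth case, Theorem \ref{abelian cover}, by restricting to the regular locus $W^0 = W_{\reg}$, whose complement has codimension at least $2$ since $W$ is normal. Following the setup preceding the statement (diagram \eqref{normal diag}), I would put $V^0 = f^{-1}(W^0)$, which is open in the normal variety $V$ hence normal, so that $f^0 = f|_{V^0}\colon V^0 \to W^0$ is an abelian $G$-cover of a smooth variety and Theorem \ref{abelian cover} applies: it furnishes invertible sheaves $\mathcal{O}_{W^0}(L_\chi)$ with $f^0_*\mathcal{O}_{V^0} = \bigoplus_\chi \mathcal{O}_{W^0}(L_\chi)$ and reduced effective divisors $D^0_1,\dots,D^0_r$ without common components satisfying \eqref{fundamentalrel1}. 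I then set $\mathcal{F}_\chi = i_*\mathcal{O}_{W^0}(L_\chi)$ and $D_i = \overline{D^0_i}$; since $W$ is normal and $W^0\subseteq W$ is a big open subset, each $\mathcal{F}_\chi$ is the unique rank-$1$ reflexive sheaf on $W$ restricting to $\mathcal{O}_{W^0}(L_\chi)$, and each $D_i$ is a prime Weil divisor on $W$.

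For the forward assertion I would use that restriction $\mathrm{Cl}(W)\to\mathrm{Cl}(W^0)$ is an isomorphism (again because $\codim_W(W\setminus W^0)\geq 2$), sending the class of $\mathcal{F}_\chi$ to that of $L_\chi$ and the class of $D_{\chi,\chi'} = \sum_i \epsilon^i_{\chi,\chi'} D_i$ to that of $D^0_{\chi,\chi'}$. The relations \eqref{fundamentalrel1} on $W^0$ then pull back to the asserted equivalences \eqref{fundamentalrel2} on $W$. Equivalently, as already noted before the statement, $f_*\mathcal{O}_V = i_* f^0_*\mathcal{O}_{V^0} = \bigoplus_\chi \mathcal{F}_\chi$ inherits an $\mathcal{O}_W$-algebra structure whose graded multiplication maps $\mu_{\chi\chi'}\colon \mathcal{F}_\chi\otimes\mathcal{F}_{\chi'}\to\mathcal{F}_{\chi\chi'}$ restrict over $W^0$ to multiplication by the section cutting out $D^0_{\chi,\chi'}$, and this forces \eqref{fundamentalrel2}.

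For the converse, given data $\{\mathcal{F}_\chi\}, \{D_{\chi,\chi'}\}$ on $W$ satisfying \eqref{fundamentalrel2}, I would restrict to $W^0$: there each $\mathcal{F}_\chi$ becomes invertible by the result of \cite{H80} used above (a rank-$1$ reflexive sheaf on a regular scheme is invertible), the $D_i|_{W^0}$ remain reduced and effective, and the relations become \eqref{fundamentalrel1}. The converse half of Theorem \ref{abelian cover} then produces a normal abelian $G$-cover $f^0\colon V^0\to W^0$ realizing these. To globalize I would form $\mathcal{A} = \bigoplus_\chi \mathcal{F}_\chi$ and extend the multiplication maps of $f^0_*\mathcal{O}_{V^0}$ from $W^0$ to all of $W$: the extension exists and is unique because $\mathcal{H}om_{\mathcal{O}_W}(\mathcal{F}_\chi\otimes\mathcal{F}_{\chi'},\mathcal{F}_{\chi\chi'})$ is reflexive (a sheaf of homomorphisms into a reflexive sheaf on a normal variety is reflexive) and $W^0$ is dense with complement of codimension $\geq 2$. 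Associativity, commutativity and $G$-equivariance hold over $W^0$, hence over $W$ by the same density argument, so $\mathcal{A}$ is a sheaf of $\mathcal{O}_W$-algebras; I set $V = \spec_W \mathcal{A}$ with structure morphism $f$, which is finite, carries the $G$-action with $V/G = W$, and satisfies $f^{-1}(W^0) = V^0$.

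The step I expect to require the most care is checking that $V$ is normal. Since $\mathcal{A}$ is a finite direct sum of rank-$1$ reflexive $\mathcal{O}_W$-modules it is $S_2$ as an $\mathcal{O}_W$-module, and $W$ is $S_2$, so $V$ satisfies $S_2$. For $R_1$, every codimension-$1$ point of $V$ lies, as $f$ is finite, over a codimension-$1$ point of $W$, which is necessarily in $W^0$, where $V^0$ is normal and hence regular in codimension $1$; so $V$ is $R_1$. By Serre's criterion $V$ is normal, and $f\colon V\to W$ is the required abelian cover, with building data $\{\mathcal{F}_\chi\}, \{D_i\}$ because its restriction over $W^0$ is $f^0$. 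If uniqueness is wanted, one notes that any such cover is normal and agrees with $f$ over the big open $W^0$ by the uniqueness in Theorem \ref{abelian cover}, whence the two are $G$-equivariantly isomorphic. Apart from the reflexivity inputs (pushforward of an invertible sheaf from a big open is reflexive; rank-$1$ reflexive on a regular scheme is invertible, \cite{H80}; $\mathcal{H}om$ into a reflexive sheaf is reflexive) and the $R_1+S_2$ verification just sketched, the argument is bookkeeping.
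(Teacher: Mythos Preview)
Your proposal is correct and follows essentially the same route as the paper: reduce both directions to Theorem~\ref{abelian cover} by restricting to the regular locus $W^0$, push forward line bundles to get the reflexive sheaves $\mathcal{F}_\chi=i_*\mathcal{O}_{W^0}(L_\chi)$, and for the converse invoke \cite{H80}, Prop.~1.9 to see that $\mathcal{F}_\chi|_{W^0}$ is invertible. Your version is in fact more thorough than the paper's, which simply asserts the algebra structure on $\bigoplus_\chi\mathcal{F}_\chi$ and does not explicitly verify normality of $V$; your use of reflexivity of $\mathcal{H}om$ to extend the multiplication and of Serre's $R_1+S_2$ criterion fills gaps the paper leaves implicit.
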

\begin{proof}
The above explanations prove the first part of the theorem. For
the converse, assume that $\{\mathcal{F}_{\chi}\}_{\chi \in G^*},
\{D_{\chi, \chi^{\prime}}\}$ are given sets of rank one reflexive sheaves and divisors subject to relations
\ref{fundamentalrel2}. Write $V=\spec(\oplus_{\chi\in G^*} \mathcal{F}_{\chi})$, where
$\oplus_{\chi\in G^*} \mathcal{F}_{\chi}$ is a sheaf of $\mathcal{O}_W$-algebras with the algebra structure given by the relations \ref{fundamentalrel2}. Let $L_{\chi}=i^*\mathcal{F}_{\chi}=\mathcal{F}_{\chi}|_{W^0}$ be the restriction of $\mathcal{F}_{\chi}$ on $W^0$. Since $\mathcal{F}_{\chi}$ is a rank one reflexive sheaf, it follows from \cite{H80}, Prop 1.9 that $L_{\chi}$ is an invertible sheaf on $W^0$. Furthermore, it holds that $\mathcal{F}_{\chi}=i_*L_{\chi}$ so that $L_{\chi}$ is uniquely determined by $\mathcal{F}_{\chi}$. By restricting the divisors $D_i$ to divisors $D^0_i$ on $W^0$, relation \ref{fundamentalrel2} restricts to relations \ref{fundamentalrel}: $L_{\chi}+L_{\chi^{\prime}}\equiv L_{\chi\chi^{\prime}}+D^0_{\chi, \chi^{\prime}}$. Now by Theorem \ref{abelian cover} these relations determine the abelian cover as claimed.
\end{proof}
\subsection{Canonical bundle formula} \label{eigenspaces}
As before, $D$ denotes the branch divisor of $f$ with irreducible components $D_i$. We have already remarked that the scheme $V$ can be constructed inside the (total space of the) vector bundle $\sL=\bigoplus\limits_{\chi\in G^*\setminus\{1\}} L^{-1}_{\chi}$ by the equations \ref{fundamental eqs} in terms of the tautological section $z_{\chi}$ of pull-back of the bundle $L_{\chi}$ to $\sL$ and the local defining equation $s_i\in H^0(W,\sO_W(D_i))$ for $D_i$. Let $p:\sL\to W$ be the bundle projection (we will use the same notation for $\sL$ and its total space). One can embed $W$ in $\sL$ by the zero section of $p$. Let the branch divisor $D$ be smooth (and reduced). As a closed subscheme, $W$ is given inside $\sL$ by the equations $z_{\chi}=0$. Let $R=f^{-1}(D)$. Suppose that the group of characters $G^*$ is generated by the characters $\chi_j, j=1,\cdots, s$. In this way, we obtain a so-called \emph{reduced building data} $L_j=L_{\chi_j}$ and any such data determines an abelian covers, see \cite{P1}. Then $V$ is defined inside $\sL$ by the equations \ref{fundamental eqs} and $f=p|_V$. Let $\sL^{\prime}=\bigoplus\limits_{j=1}^s L^{-1}_{j}$ and consider the bundle projection $p^{\prime}:\sL^{\prime}\to W$ . Note that the equations \ref{fundamental eqs} imply that $V$ has at most singularities over the singular points of the branch divisor $D$. The next result generalizes \cite{BHPV}, Lemma 17.1. 
\begin{theorem} \label{divisor cover}
Let $G$ be a finite abelian group. Let $V$ be a normal algebraic variety and $W$ a smooth one and let $f\colon V\to W$ be an abelian cover of degree $n$ with Galois group $G$ such that the linear equivalences \ref{fundamentalrel1} hold and the branch divisor $D$ is smooth. Then we have:
\begin{enumerate}
\item $\sO_V(R_{red})=f^*\sL^{\prime}$.
\item $f^*D=nR_{red}$.
\item $K_V=f^*(K_W\otimes {\sL^{\prime}}^{n-1})$. 
\end{enumerate}
\end{theorem}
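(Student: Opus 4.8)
The plan is to deduce all three statements from the structure of $f$ in a neighbourhood of the branch divisor. Since $D=D_1+\dots+D_r$ is assumed smooth, its components are themselves smooth and pairwise disjoint, so the smoothness criterion recalled just before the theorem applies trivially (its remaining hypotheses, on normal crossings of $\bigcup D_i$ and on intersection points, are vacuous here) and $V$ is smooth; hence $K_V$ is an honest line bundle and the divisorial manipulations below are legitimate. By purity of the branch locus and tameness of the ramification (we are over $\C$), analytically locally along each component of $R_i:=f^{-1}(D_i)_{red}$ the covering $f$ is the standard cyclic covering $w_i^{m_i}=t_i$, where $t_i$ is a local equation for $D_i$, $m_i=|H_i|$ is the ramification order over $D_i$, and $w_i$ is the fibre coordinate of the summand of $\sL$ attached to the inertia character $\chi_i$. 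In this model $w_i$ is itself part of a local coordinate system on $V$, so each $R_i$ is reduced and smooth, $R_{red}=\sum_iR_i$, and $f^*D_i=m_iR_i$; summing over the (disjoint) components of $D$ gives statement (2) (the common coefficient being $n$ in the totally ramified situation at hand, exactly as for the cyclic covers of \cite{BHPV}).

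For (1) I would first record that, for every character $\chi$, the local model shows that $z_\chi|_V\in H^0\!\big(V,f^*L_\chi\big)$ vanishes to order $a^i_\chi$ along $R_i$, hence has divisor $\sum_i a^i_\chi R_i$, so that $\sO_V\!\big(\sum_i a^i_\chi R_i\big)\cong f^*L_\chi$ for all $\chi$. Specialising to the generators $\chi_1,\dots,\chi_s$ of $G^*$ and combining these isomorphisms by means of the fundamental relations \ref{fundamentalrel1} (in particular the inverse relations \ref{fundamentalrelinverse}), which control how the $L_\chi$ and the $D_i$ interact, one rewrites $\sO_V(R_{red})=\sO_V(\sum_iR_i)$ as the pullback $f^*\sL'$ of the reduced building data, which is (1). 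Alternatively one can phrase this through the embedding $V\subset\sL'$ given by the equations \ref{fundamental eqs} and the zero section $Z\cong W$, reading the line bundle off the conormal data of $Z$; but this again requires keeping track of the exponents $\epsilon^i_{\chi,\chi^{\prime}}$, $a^i_\chi$, so it is not really a shortcut.

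Finally, (3) follows by combining (1) and (2) with the ramification (Riemann--Hurwitz) formula for the finite morphism $f$ between the smooth varieties $V$ and $W$: since the ramification index along $R_i$ is $m_i$, one has $K_V\equiv f^*K_W+\sum_i(m_i-1)R_i$, i.e.\ $\omega_{V/W}\cong\sO_V\!\big(\sum_i(m_i-1)R_i\big)$, which here is $\sO_V((n-1)R_{red})$; substituting the isomorphism of (1) gives $\omega_{V/W}\cong f^*(\sL')^{n-1}$, hence $K_V=f^*\!\big(K_W\otimes(\sL')^{n-1}\big)$. (One could instead iterate the adjunction computation of \cite{BHPV}, Lemma 17.1 along the tower of cyclic coverings attached to $\chi_1,\dots,\chi_s$, but the branch divisors propagate up the tower with correction terms, so the bookkeeping cost is the same.) The step I expect to be the main obstacle is precisely (1): the local geometry is completely transparent, but converting the family of relations $\sO_V(\sum_i a^i_\chi R_i)\cong f^*L_\chi$ into the single clean identity $\sO_V(R_{red})\cong f^*\sL'$ — correctly through the reduced building data, the fundamental relations, and the duality convention in $f_*\sO_V=\bigoplus_\chi L_\chi^{-1}$ — is where the real work, and the precise hypotheses needed, reside.
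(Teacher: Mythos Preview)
Your treatment of (2) and (3) --- local cyclic models for the ramification, then Hurwitz combined with (1) --- is exactly what the paper does. The divergence is in (1).

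Your primary route is computational: record that $z_\chi|_V$ vanishes to order $a^i_\chi$ along $R_i$, so that $\sO_V\big(\sum_i a^i_\chi R_i\big)\cong f^*L_\chi$ for every $\chi$, and then try to combine these identities over the generating characters, via the fundamental relations, into the single statement $\sO_V(R_{red})\cong f^*\sL'$. You correctly flag this combinatorial reduction as the sticking point and leave it open.

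The paper takes precisely the ``alternative'' you mention and then dismiss as not a shortcut. It works inside the total space, observes that the zero section $W\subset\sL$ has ideal sheaf given by $p^*\sL'$ (this is where the reduced building data enter), checks from the defining equations \ref{fundamental eqs} that $V$ and $W$ meet transversally along $R_{red}$, and restricts: $\sO_V(R_{red})=\sO_\sL(B)|_V=f^*\sL'$. No tracking of the exponents $a^i_\chi$ or $\epsilon^i_{\chi,\chi'}$ is needed; the conormal/transversality argument packages all of that at once. So the geometric embedding is in fact the shortcut, and it is the paper's entire argument for (1). Your computational route is sound in principle, but the step you leave unfinished is exactly the one the paper's approach is designed to sidestep.
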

\begin{proof}
Consider the ideal sheaf $\sO_\sL(B)$ of the closed subscheme $W$ in $\sL$. By the equations of $W$ given by the reduced building data mentioned above, it follows that $\sO_\sL(B)=p^*\sL^{\prime}$. By considering the equations of $V$ inside $\sL$, one sees that $V$ and $W$ intersect transversaly in $R_{red}$ inside $\sL$ and so: $\sO_V(R_{red})=\sO_\sL(B)|_V=f^*\sL^{\prime}$.
Part (3) is a result of the usual Hurwitz formula and the fact that the ramification divisor of $f$ is equal to $(n-1)R_{red}$ together with (1) above.  
\end{proof}
Now suppose that $V$ and $W$ are normal varieties, not necessarily smooth. Using Theorem \ref{singular abelian}, we can extend Theorem \ref{divisor cover} to normal varieties. Let us remark that for a normal variety $X$, $j:X^0\hookrightarrow X$ a smooth part of $X$ having complement of codimension $\geq 2$, we have $\omega_X=j_*\omega_{X^0}$, see \cite{Ri}, Corollary (8). Furthermore, as in Theorem \ref{singular abelian}, we have reflexive sheaves $L_{\chi}=i_*L_{\chi}^0$, where $i:W^0\hookrightarrow W$ is as in diagram \ref{normal diag} and $L_{\chi}^0$ is invertible on $W^0$. In particular, $\sL^{\prime}=i_*{\sL^{\prime}}^{0}=i_*(\bigoplus\limits_{j=1}^s {(L^0_{j})}^{-1})$.
\begin{proposition} \label{singular canonical}
Let $V$ and $W$ be normal varieties and $f\colon V\to W$ be an abelian cover with Galois group $G$. With the notations as above we have $K_V=f^*(K_W\otimes {\sL^{\prime}}^{n-1})$.
\end{proposition}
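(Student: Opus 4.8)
The plan is to reduce the statement to the situation of Theorem \ref{divisor cover} over a large open subset and then propagate the resulting isomorphism by reflexivity, in exactly the spirit of the passage from Theorem \ref{abelian cover} to Theorem \ref{singular abelian} above.

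First I would carve out of $W$ the open set $W^1\subseteq W^0$ obtained from the smooth locus $W^0$ by further deleting the locus where the reduced branch divisor $D=\sum_i D_i$ fails to be smooth (the singular points of the individual $D_i$ together with the pairwise intersections $D_i\cap D_j$). Since $W$ is normal, $W\setminus W^0$ has codimension $\geq 2$ in $W$, and the non-smooth locus of a reduced divisor has codimension $\geq 1$ in the divisor, hence $\geq 2$ in $W$; therefore $\codim(W\setminus W^1,W)\geq 2$. Put $V^1:=f^{-1}(W^1)$. It is $G$-stable and open in $V$, and $f^1:=f|_{V^1}\colon V^1\to W^1$ realizes $W^1$ as $V^1/G$, so it is an abelian cover with group $G$, now over a smooth base and with smooth branch divisor; moreover the relations \ref{fundamentalrel1} continue to hold on $W^1$, being the restrictions of the relations that hold on $W^0$. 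Because $f$ is finite and surjective and $V,W$ are irreducible of the same dimension, the preimage of a closed subset preserves codimension, so $\codim(V\setminus V^1,V)\geq 2$ as well.

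Next I would apply Theorem \ref{divisor cover}(3) to $f^1$: writing ${\sL^{\prime}}^1:=\sL^{\prime}|_{W^1}$, which is invertible since $\sL^{\prime}=i_*{\sL^{\prime}}^0$ is reflexive of rank one and $W^1$ is regular (cf.\ \cite{H80}, Prop.\ 1.9), we get $\omega_{V^1}\cong (f^1)^*\bigl(\omega_{W^1}\otimes({\sL^{\prime}}^1)^{\otimes(n-1)}\bigr)$. Now set $\sM:=\bigl(\omega_W\otimes{\sL^{\prime}}^{\otimes(n-1)}\bigr)^{\vee\vee}$, the rank one reflexive sheaf on $W$ representing the class $K_W\otimes{\sL^{\prime}}^{n-1}$, and let $\sM_V:=(f^*\sM)^{\vee\vee}$ be the reflexive hull of its pullback. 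On the smooth locus $W^1$ all sheaves in sight are invertible, so $\sM|_{W^1}=\omega_{W^1}\otimes({\sL^{\prime}}^1)^{\otimes(n-1)}$; pulling back (a locally free sheaf, so its pullback is already reflexive) gives $\sM_V|_{V^1}=(f^1)^*\bigl(\omega_{W^1}\otimes({\sL^{\prime}}^1)^{\otimes(n-1)}\bigr)$. Combining this with the displayed isomorphism and with $\omega_V|_{V^1}=\omega_{V^1}$, I obtain $\sM_V|_{V^1}\cong\omega_V|_{V^1}$.

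Finally, both $\sM_V$ and $\omega_V$ are rank one reflexive sheaves on the normal variety $V$ (for $\omega_V$ this is \cite{Ri}, Corollary (8), which also gives $\omega_V=j_*\omega_{V^1}$ for the inclusion $j\colon V^1\hookrightarrow V$), and such a sheaf is determined by its restriction to an open subset whose complement has codimension $\geq 2$; since $\codim(V\setminus V^1,V)\geq 2$, the isomorphism over $V^1$ extends uniquely to $\omega_V\cong\sM_V$, i.e.\ $K_V=f^*(K_W\otimes{\sL^{\prime}}^{n-1})$. The only genuinely delicate point is the codimension bookkeeping: making sure that removing the non-smooth locus of the branch divisor still leaves a complement of codimension $\geq 2$, and that this property is preserved under pullback along the finite surjective map $f$. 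Everything else is the reflexive-hull formalism already deployed for Theorem \ref{singular abelian}.
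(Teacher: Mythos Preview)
Your argument is correct and follows essentially the same approach as the paper's proof: restrict to an open subset with complement of codimension $\geq 2$ where Theorem~\ref{divisor cover}(3) applies, and then extend by reflexivity using \cite{Ri}. The only cosmetic difference is the choice of open set---the paper takes $V'=\reg(V)\cap f^{-1}(\reg(W))$ and $W'=f(V')$, whereas you shrink $W^0$ by the singular locus of the branch divisor; your version has the virtue of verifying the smooth-branch-divisor hypothesis of Theorem~\ref{divisor cover} explicitly.
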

\begin{proof}
Let $V^{\prime}=\reg(V)\cap f^{-1}(\reg(W))=V^0\cap f^{-1}(W^0)$ and $W^{\prime}=f(V^{\prime})$. As both $V$ and $W$ are normal and $f$ is finite by assumption, it follows that the complement of $V^{\prime}$ and hence of $W^{\prime}$ is of codimension $\geq 2$. As in the proof of Theorem \ref{singular abelian}, the sheaf $\sL^{\prime}$ defined above is reflexive. The above remarks imply that $K_W=i_*K_{W^{\prime}}$ and $K_V=\iota_*K_{V^{\prime}}$, where $\iota: V^{\prime}\hookrightarrow V$ (resp. $i:W^{\prime}\hookrightarrow W$) is the natural inclusion.  The claim now follows from this together with Theorem \ref{divisor cover}(3). 
\end{proof}
\subsection{Field Extensions}
\subsubsection{Kummer Theory} \label{Kummer}
Let $K$ be a field which contains $n$ distinct $n$-th roots of unity with $n>1$. A \emph{Kummer extension} of $K$ is a (finite) Galois field extension $L/K$ whose Galois group $G$ is abelian. Kummer theory asserts that any such Galois extension comes from adjoining $n$-th roots of unity from elements of $K^{*}$. In other words, there are $a_1,\cdots, a_t\in K^*$ such that $L=K(a_1^{\frac{1}{n}},\cdots,a_t^{\frac{1}{n}})$. Moreover, if $K^{*}$ is the multiplicative group of $K$, then Kummer extensions of exponent $n$ are in bijection with subgroups of the group $K^{*}/(K^{*})^n$. 
The correspondence can be described explicitly as follows. Given a subgroup 
\[\Delta\subseteq K^{*}/(K^{*})^n\]
the corresponding extension is given by 
\[L=K(\Delta^{\frac{1}{n}}),\]
where $\Delta^{\frac{1}{n}}=\{a^{\frac{1}{n}}\mid a\in K^{*}, a\cdotp(K^{*})^n\in \Delta\}$
it also holds that
\[\gal(L/K)\cong \Delta.\]
Now let $f\colon V\to W$ be a Galois covering of algebraic varieties. It gives an extension of the corresponding function fields $\C(W)\subseteq \C(V)$. If $G$ is the Galois group of $f$, then $\gal(\C(V)/ \C(W))\cong G$. If $G=\langle\sigma_1,\cdots, \sigma_s\rangle$ is furthermore an abeian group, then $\C(V)/ \C(W)$ is a Kummer extension. Let us use the same notation for elements of $G$, elements of $\C(V)/ \C(W)$ and the corresponding automorphisms of $V$. Setting $\ord(\sigma_i)=m_i$, by the above description one has $\C(V)=\C(W)(\sqrt[m_1]{f_1},\cdots, \sqrt[m_s]{f_s})=\C(W)(v_1,\cdots, v_s)$, with $f_i\in \C(W)$ and $\sigma_j\cdotp v_i=\zeta_{m_i}^{\delta_{ij}}\cdotp v_i$, where $\delta_{ij}$ is the Kronecker delta and $\zeta_{m_i}=\zeta_{m}^{\frac{m}{m_i}}$ is a primitive $m_i$-th root of unity ($\zeta_{m}$ being a primitive $m$-th root of unity).
\section{Structure of metabelian Galois coverings}
\begin{definition} \label{defmetabelian}
A metabelian (resp. metacyclic) group $G$ is a group that has an abelian (resp. cyclic) normal subgroup $A$ such that $G/A$ is also abelian (resp. cyclic). In other words, it is an extension of an abelian (resp. cyclic) group by an abelian (resp. cyclic) group. 
\end{definition}
The above definition is equivalent to saying that metabelian groups are precisely the solvable groups of derived length at most 2. \par
Suppose 
\begin{equation} \label{metabelian extension}
0\to A\to G\to N\to 0
\end{equation} 
is the extension mentioned in Definition \ref{defmetabelian} with $A,N$ abelian. It is straightforward to see that the very definition of a metabelian group implies that $G$ is metabelian if and only if $G$ has the following presentation
\begin{equation} \label{presentation metabelian}
\langle \sigma_1,\cdots,\sigma_s, \tau_1,\cdots,\tau_l\mid
\sigma_i\sigma_j=\sigma_j\sigma_i, \tau_i\tau_j=\tau_j\tau_i,
\sigma_i^{m_i}=1, \sigma_i\tau_j=\tau_j\sigma^{r_{1ij}}_1\cdots
\sigma^{r_{sij}}_s, \tau_j^{a_j}=\sigma^{k_{1j}}_1\cdots \sigma^{k_{sj}}_s\rangle
\end{equation}
Here $A=\langle \sigma_1,\cdots,\sigma_s \rangle$ and $N=\langle\overline{\tau}_1,\cdots,\overline{\tau}_l \rangle$, and $\overline{\tau}_j$ denotes the image of $\tau_j$ in $N=G/A$.\par
Now Let $G$ be a finite metabelian group as above, $X$ a normal algebraic variety over $\C$ with $G\subset \aut(X)$ and $Y$ a smooth complex algebraic variety such that $X/G=Y$ and the cover $\pi: X\to Y$ is Galois. We are interested in the quotient $X\to X/G\coloneqq Y$ in the case that this map yields a Galois covering of algebraic varieties. The factorization \ref{metabelian extension} gives rise to a factorization $\pi \colon X\xrightarrow{p}Z\xrightarrow{q} Y$ where $p,q$ are the corresponding intermediate abelian Galois covers, i.e., $p\colon X\to Z=X/A$
is an abelian Galois covering with Galois group $A$ and $q\colon Z\to Y=Z/N$
is an abelian Galois covering with Galois group $N$. Therefore to study the Galois covering $\pi \colon X\to Y$, it is helpful to study these intermediate abelian coverings.
We will use the theory of abelian Galois coverings that we explained in Section \ref{abeliancovers} (developed in \cite{C10} and \cite{P1}) to study $\pi \colon X\to Y$ by looking at these intermediate abelian coverings.\par
Explicitely, $Z=X/A$ is a normal variety, but not in general smooth. By Theorem \ref{singular abelian}, the cover $p\colon X\to Z$ is determined by the existence of rank one reflexive sheaves $(\mathcal{F}_{\chi})_{\chi \in A^*}$, and reduced effective divisors $(D_i)$ on $Z$ without common components such that \ref{fundamentalrel2} holds. The multiplication map $\mu_{\chi\chi^{\prime}}:\mathcal{F}_{\chi}\otimes \mathcal{F}_{\chi{\prime}}\to \mathcal{F}_{\chi\chi^{\prime}}$ is fully determined by its restriction to $Z^0$. Before stating our structure theorem, let us introduce the following notation: Suppose $\chi$ is an irreducible character of the abelian group $A=\langle \sigma_1,\cdots,\sigma_s \rangle$. Let $\tau_j\in N$. Since $A$ is a normal subgroup of $G, \tau_j^{-1}\sigma_u\tau_j\in A$ for every $u=1,\cdots, s$. We define a new character $\chi^{(1)}_j$ of $A$ by $\chi^{(1)}_j(\sigma_u)=\chi(\tau_j^{-1}\sigma_u\tau_j)$ for every $u=1,\cdots, s$. Since $\chi$ is an irreducible character, $\chi^{(1)}_j$ is also irreducible. In particular for each $\gamma\in \N$ one can define a character $\chi^{(\gamma)}_{j}$ of $A$ by setting $\chi^{(\gamma)}_{j}(\sigma_u)=\chi(\tau_j^{-\gamma}\sigma_u\tau_j^{\gamma})$. By presentation \ref{presentation metabelian}, it is clear that $\chi^{(a_j)}_{j}=\chi.$ \par
Now, we are ready to state our theorem that describes the structure of metabelian Galois covers. 
\begin{theorem} \label{structure of metabelian}(Structure theorem for metabelian covers) A
metabelian Galois cover $\pi \colon X\to Y$ is determined by the following data:
\begin{enumerate}
\item Line bundles $(L_{\eta})_{\eta\in N^*}$ and reduced effective divisors $B_1\cdots, B_l$ on $Y$ such that $L_{\eta}+L_{\eta^{\prime}}\equiv \sum \epsilon^{i}_{\eta\eta^{\prime}} B_i$.
\item Reduced effective Weil divisors $D_1,\cdots, D_n$ on $Z=\spec
(\oplus L_{\eta}^{-1})$ identifying the character $\chi_i$ with $i$ such that  $\overline{\tau_j}(D_{\chi_i})=D_{\chi^{(1)}_{ij}}$, where $\chi^{(1)}_{ij}$ is the character of $A$ associated to $\chi_i$ defined above.
\item Rank one reflexive sheaves $\mathcal{F}_{\chi_1},\cdots,
\mathcal{F}_{\chi_n}$ on $Z$ such that the linear equivalence \ref{fundamentalrel2} holds and for every $\gamma\in \N$, $\overline{\tau_j}^{\gamma}(\mathcal{F}_{\chi_i})=\mathcal{F}_{\chi^{(\gamma)}_{ij}}$,
where $\chi^{(\gamma)}_{ij}$ is defined above. Furthermore, $\overline{\tau_j}^{a_j}$ acts on the local sections of $\mathcal{F}_{\chi_i}$ as multiplication by $\exp(\frac{2\pi\sqrt{-1}k_{ij}}{m_i})$.
\end{enumerate}
\end{theorem}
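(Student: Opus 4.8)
The plan is to exploit the factorization $\pi\colon X\xrightarrow{p}Z\xrightarrow{q}Y$ and to analyse the two abelian covers $p\colon X\to Z$ (with group $A$) and $q\colon Z\to Y$ (with group $N$) separately by means of Theorems~\ref{abelian cover} and~\ref{singular abelian}, the only genuinely new content being the interaction between them, governed by the residual action of $N=G/A$ on $Z=X/A$. For the forward implication: since $Y$ is smooth and $Z$ is normal, Theorem~\ref{abelian cover} applied to $q$ produces line bundles $(L_\eta)_{\eta\in N^*}$ with $q_*\mathcal O_Z=\bigoplus_\eta L_\eta^{-1}$ and reduced effective divisors $B_1,\dots,B_l$ on $Y$ with $L_\eta+L_{\eta'}\equiv\sum_i\epsilon^{i}_{\eta\eta'}B_i$; this is datum~(1), and it identifies $Z$ with $\spec(\bigoplus_\eta L_\eta^{-1})$. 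Since $X$ and $Z$ are normal, Theorem~\ref{singular abelian} applied to $p$ produces rank one reflexive sheaves $(\mathcal F_\chi)_{\chi\in A^*}$ with $p_*\mathcal O_X=\bigoplus_\chi\mathcal F_\chi$ and reduced effective Weil divisors $D_1,\dots,D_n$ on $Z$, with inertia characters $\chi_i\in A^*$, satisfying the fundamental relations~\ref{fundamentalrel2}; the sheaves $\mathcal F_{\chi_i}$ and the divisors $D_i$ are the objects appearing in data~(3) and~(2).

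The heart of the forward direction is the equivariance. View $\tau_j\in G$ as an automorphism of $X$; since $A$ is normal in $G$ it descends to the automorphism $\overline{\tau_j}$ of $Z$ coming from the $N$-action. If $\phi$ is a local section of the $\chi$-eigensheaf of $p_*\mathcal O_X$, so $\sigma\cdot\phi=\chi(\sigma)\phi$ for all $\sigma\in A$, then for every $\sigma\in A$ one has $\sigma\cdot(\tau_j\cdot\phi)=\tau_j\cdot\big((\tau_j^{-1}\sigma\tau_j)\cdot\phi\big)=\chi(\tau_j^{-1}\sigma\tau_j)\,(\tau_j\cdot\phi)=\chi^{(1)}_j(\sigma)\,(\tau_j\cdot\phi)$, so $\tau_j$ carries the $\chi$-eigensheaf to the $\chi^{(1)}_j$-eigensheaf; descending to $Z$ and iterating gives $\overline{\tau_j}^{\gamma}(\mathcal F_{\chi_i})=\mathcal F_{\chi^{(\gamma)}_{ij}}$, and the same bookkeeping applied to the inertia groups of the components of the branch divisor of $p$ gives $\overline{\tau_j}(D_{\chi_i})=D_{\chi^{(1)}_{ij}}$, i.e. datum~(2). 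Finally, by the presentation~\ref{presentation metabelian} the element $\tau_j^{a_j}=\sigma_1^{k_{1j}}\cdots\sigma_s^{k_{sj}}$ lies in $A$, hence acts on the $\chi_i$-eigensheaf as the scalar $\chi_i(\sigma_1^{k_{1j}}\cdots\sigma_s^{k_{sj}})=\exp(2\pi\sqrt{-1}\,k_{ij}/m_i)$ under the normalization $\chi_i(\sigma_u)=\zeta_{m_i}^{\delta_{iu}}$ that labels the characters; together with the relations $\sigma_i\tau_j=\tau_j\sigma_1^{r_{1ij}}\cdots\sigma_s^{r_{sij}}$, read off the action on eigensheaves in the same way, this completes datum~(3).

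For the converse one reverses the construction. From datum~(1) and Theorem~\ref{abelian cover} one builds the $N$-cover $q\colon Z=\spec(\bigoplus_\eta L_\eta^{-1})\to Y$. From datum~(3), one applies Theorem~\ref{singular abelian} to the building data $(\mathcal F_\chi,D_i)$ (completing it to the full character group if necessary, as in~\cite{P1}) to obtain the $A$-cover $p\colon X=\spec(\bigoplus_\chi\mathcal F_\chi)\to Z$. The equivariance conditions~(2) and~(3) say precisely that, for each $j$, the permutation $\mathcal F_{\chi_i}\mapsto\mathcal F_{\chi^{(1)}_{ij}}$ together with the prescribed transformation of the $D_i$ is compatible with the fundamental-relation multiplication maps, so it defines an isomorphism of $\mathcal O_Z$-algebras $\bigoplus_\chi\mathcal F_\chi\xrightarrow{\ \sim\ }\overline{\tau_j}^{*}\big(\bigoplus_\chi\mathcal F_\chi\big)$, equivalently an automorphism $\widetilde{\tau_j}$ of $X$ lying over $\overline{\tau_j}$. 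The scalar condition on $\overline{\tau_j}^{a_j}$ pins down the otherwise $A$-torsor of such lifts so that $\widetilde{\tau_j}^{a_j}=\sigma_1^{k_{1j}}\cdots\sigma_s^{k_{sj}}$ holds in $\aut(X)$, while the conjugation relations hold by the computation already used. Hence the subgroup $H\subseteq\aut(X)$ generated by the deck transformations $\sigma_1,\dots,\sigma_s$ of $p$ and the lifts $\widetilde{\tau_1},\dots,\widetilde{\tau_l}$ satisfies all the defining relations of~\ref{presentation metabelian}, so there is a surjection $G\twoheadrightarrow H$; since $H\cap\gal(X/Z)=\gal(X/Z)\cong A$ and $H$ surjects onto $\gal(Z/Y)\cong N$, one gets $|H|=|A|\cdot|N|=|G|$, so this surjection is an isomorphism, and as $X/H=Z/N=Y$ the cover $\pi$ is Galois with group $G$.

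The step I expect to be the main obstacle is making the lifts $\widetilde{\tau_j}$ mutually coherent: one must show that the commutation relations $\widetilde{\tau_i}\,\widetilde{\tau_j}=\widetilde{\tau_j}\,\widetilde{\tau_i}$ — and, on the nose, $\widetilde{\tau_j}^{a_j}=\sigma_1^{k_{1j}}\cdots\sigma_s^{k_{sj}}$ — can be realized for one and the same choice of lifts; the obstruction lives in $A$ (the commutator $\widetilde{\tau_i}\,\widetilde{\tau_j}\,\widetilde{\tau_i}^{-1}\,\widetilde{\tau_j}^{-1}$ is an $A$-deck transformation that must be trivialized), and this is precisely where the way in which the $D_i$ (and the $B_i$ under $q$) transform under $N$ enters, since that compatibility is what guarantees that the sheaf-level data of~(2)--(3) assemble into an \emph{algebra} automorphism rather than a bare permutation of summands. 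A secondary, more routine point — already handled in the proof of Theorem~\ref{singular abelian} — is that $Z$ is normal but typically singular, so all of the above is carried out first over the smooth locus $Z^{0}$, where the $\mathcal F_\chi$ are invertible, and then extended by reflexivity, using that a rank one reflexive sheaf is determined by its restriction to $Z^{0}$.
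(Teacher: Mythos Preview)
Your approach is essentially the same as the paper's: factor $\pi$ through $Z=X/A$, invoke Theorems~\ref{abelian cover} and~\ref{singular abelian} for the two abelian layers, and encode the $G$-structure as $N$-equivariance of the $A$-building data on $Z$ together with the scalar condition coming from $\tau_j^{a_j}\in A$.

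The differences are in completeness rather than strategy. The paper's proof treats only the converse direction: it builds $Z$ from datum~(1), sets $X=\spec\big(\bigoplus_\chi\mathcal F_\chi\big)$, chooses local generators $e_{\chi_i}$ on $Z^0$ with $e_{\chi^{(\gamma)}_{ij}}=(\overline{\tau_j}^{\gamma})^*e_{\chi_i}$, and observes that condition~(2) makes $\overline{\tau_j}^*$ an $\mathcal O_Z$-algebra endomorphism while the scalar in~(3) forces $(\overline{\tau_j}^{a_j})^*$ to match $\sigma_1^{k_{1j}}\cdots\sigma_s^{k_{sj}}$. Your write-up supplies the forward direction explicitly (the eigensheaf computation $\sigma\cdot(\tau_j\cdot\phi)=\chi^{(1)}_j(\sigma)(\tau_j\cdot\phi)$, and the inertia bookkeeping for the $D_i$), which the paper leaves implicit, and it also isolates the coherence question for the lifts $\widetilde{\tau_j}$---in particular whether $\widetilde{\tau_i}$ and $\widetilde{\tau_j}$ commute---that the paper does not address at all. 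So your proposal is correct and, if anything, more careful than the published argument; the ``main obstacle'' you flag is genuine and is simply not discussed in the paper's proof.
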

\begin{proof}
(1) yields a flat abelian cover $q\colon Z\to Y$, where $Z=\spec(\oplus L_{\eta}^{-1})$. Next, define $X\coloneqq\spec(\oplus_{\chi \in N^*}\mathcal{F}_{\chi})$. The $\mathcal{O}_Z$-algebra structure is given by the morphisms $\mathcal{F}_{\chi}\otimes \mathcal{F}_{\chi^{\prime}}\to \mathcal{F}_{\chi\chi^{\prime}}$ which is uniquely determined by the restriction to $Z^0$ as $Z$ is normal. Let $g_k$ be the local equation for $D_k$, i.e., a function on $Z$
such that $D_k=\{g_k=0\}$. As $\mathcal{F}_{\chi_i}$ is locally-free on $Z^0$ (\cite{H80}, Prop 1.9), choose local generators $e_{\chi_i}$ and we set $e_{\chi^{(\gamma)}_{ij}}={\overline{\tau_j}^{\gamma}}^*(e_{\chi_i})$. The algebra structure on the restriction of $\oplus_{\chi \in N^*}\mathcal{F}_{\chi}$ on this local open subset is given by  
\[e_{\chi_i}e_{\chi_j}=e_{\chi_i\chi_j}\prod g_k^{\epsilon_{ij}^k}.\]
If we choose different generators $\tilde{e}_{\chi_i}$ satisfying the same equations $\tilde{e}_{\chi^{(\gamma)}_{ij}}={\overline{\tau_j}^{\gamma}}^*(\tilde{e}_{\chi_i})$, then the algebra structure is isomorphic to the above algebra. Due to relations $\overline{\tau_j}(D_{\chi_i})=D_{\chi^{(1)}_{ij}}$ one concludes that the morphism $\overline{\tau_j}^*$ defines a morphism of $\mathcal{O}_Z$-algebra on $\oplus_{\chi \in N^*}\mathcal{F}_{\chi}$. Finally, relations at the end of (3) ensure that the ${{\overline{\tau_j}^{a_j}}}^*$ satisfy the relation of \ref{presentation metabelian}. 
\end{proof}
With the notations of Theorem \ref{structure of metabelian}, we define $U_{\chi}\coloneqq q_*(\sF_{\chi})$ for every $\chi\in A^*$. This sheaves will be very useful in the sequel.
\begin{proposition} \label{Uisomorphism}
$\tau_j\colon X\to X$ induces an isomorphism $\tau_j^*\colon U_{\chi_i}\to	U_{\chi^{(1)}_{ij}}$ and $\tau_j^{*^{a_j}}\colon U_{\chi_i}\to U_{\chi_i}$ is $f\mapsto \exp(\frac{2\pi \sqrt{-1}k_{ij}}{m_i})f$.
\end{proposition}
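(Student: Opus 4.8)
The plan is to obtain the Proposition directly from the Structure Theorem~\ref{structure of metabelian} by pushing everything forward along $q$. First I would fix the geometric set-up: $\tau_j\in G\subseteq\aut(X)$ is an automorphism of $X$ over $Y$, and since $A$ is a normal subgroup of $G$ it descends to the automorphism $\overline{\tau_j}\in N$ of $Z=X/A$; because $N$ acts on $Z$ with quotient $Y$, we have $q\circ\overline{\tau_j}=q$ (and likewise $q\circ\overline{\tau_j}^{-1}=q$), so $\overline{\tau_j}$ preserves the fibres of $q$. The only categorical fact needed is that $q_*$ ``absorbs'' $\overline{\tau_j}$: since $\overline{\tau_j}$ is an isomorphism, $(\overline{\tau_j})^{*}\mathcal G=(\overline{\tau_j}^{-1})_{*}\mathcal G$ for every $\sO_Z$-module $\mathcal G$, hence $q_*(\overline{\tau_j})^{*}\mathcal G=(q\circ\overline{\tau_j}^{-1})_{*}\mathcal G=q_*\mathcal G$; and $q_*$ sends multiplication by a scalar to multiplication by the same scalar.

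For the first assertion, I would invoke Theorem~\ref{structure of metabelian}(3): the action of $\tau_j$ on $p_*\sO_X=\bigoplus_{\chi\in A^{*}}\sF_{\chi}$ carries the summand $\sF_{\chi_i}$ isomorphically onto $\sF_{\chi^{(1)}_{ij}}$, the carrying map being $\overline{\tau_j}$-semilinear, i.e. an isomorphism $(\overline{\tau_j})^{*}\sF_{\chi_i}\xrightarrow{\sim}\sF_{\chi^{(1)}_{ij}}$, with inverse induced by $\tau_j^{-1}$. Applying $q_*$ together with the remark above turns this into $\tau_j^{*}\colon U_{\chi_i}=q_*\sF_{\chi_i}=q_*(\overline{\tau_j})^{*}\sF_{\chi_i}\xrightarrow{\sim}q_*\sF_{\chi^{(1)}_{ij}}=U_{\chi^{(1)}_{ij}}$, an isomorphism because $\tau_j^{-1}$ supplies a two-sided inverse. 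Concretely, a section of $U_{\chi_i}$ over an open $V\subseteq Y$ is a section of $\sF_{\chi_i}$ on $q^{-1}(V)$; since $\overline{\tau_j}$ preserves $q^{-1}(V)$, its image under $\tau_j^{*}$ is again a section over $q^{-1}(V)$, lying in $\sF_{\chi^{(1)}_{ij}}$ by the eigencharacter identity $\chi_i(\tau_j^{-1}\sigma_u\tau_j)=\chi^{(1)}_{ij}(\sigma_u)$.

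For the second assertion, I would iterate: composing the isomorphism above $a_j$ times gives $(\tau_j^{*})^{a_j}\colon U_{\chi_i}\to U_{\chi^{(a_j)}_{ij}}$, and $\chi^{(a_j)}_{ij}=\chi_i$ since $\tau_j^{a_j}=\sigma_1^{k_{1j}}\cdots\sigma_s^{k_{sj}}$ by presentation~\ref{presentation metabelian}, so this is an endomorphism of $U_{\chi_i}$. This composite is the action of the element $\tau_j^{a_j}$, which by the same presentation lies in the abelian subgroup $A$; hence it acts $\sO_Z$-linearly on the $\chi_i$-isotypic sheaf $\sF_{\chi_i}$ through the scalar $\chi_i(\tau_j^{a_j})$, which Theorem~\ref{structure of metabelian}(3) records as $\exp\!\big(\tfrac{2\pi\sqrt{-1}k_{ij}}{m_i}\big)$. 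Since $q_*$ preserves scalar multiplication, $(\tau_j^{*})^{a_j}$ acts on $U_{\chi_i}$ by $f\mapsto\exp\!\big(\tfrac{2\pi\sqrt{-1}k_{ij}}{m_i}\big)f$, as claimed. I do not anticipate a genuine difficulty: once the Structure Theorem is available the statement is essentially formal, and the only points requiring attention are bookkeeping the base automorphism $\overline{\tau_j}$ as it is swallowed by $q_*$ and fixing the convention relating $\tau_j$ to $\tau_j^{*}$ so the exponent comes out with the correct sign.
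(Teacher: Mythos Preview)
Your proposal is correct and follows essentially the same route as the paper. The paper's proof is terser: it identifies $U_{\chi_i}(V)$ with the eigenfunctions in $\mathcal{O}_X(\pi^{-1}(V))$ on which $\sigma_u^*$ acts by the appropriate root of unity, and then reads off both assertions directly from the relations $\sigma_i\tau_j=\tau_j\sigma_1^{r_{1ij}}\cdots\sigma_s^{r_{sij}}$ and $\tau_j^{a_j}=\sigma_1^{k_{1j}}\cdots\sigma_s^{k_{sj}}$ of presentation~\ref{presentation metabelian}. Your argument is the same computation dressed in functorial language (the observation that $q_*(\overline{\tau_j})^*=q_*$ because $q\circ\overline{\tau_j}^{-1}=q$), and your ``Concretely'' sentence is exactly the paper's direct calculation. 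One small caution: citing Theorem~\ref{structure of metabelian}(3) as the \emph{source} of the isomorphism $(\overline{\tau_j})^*\mathcal{F}_{\chi_i}\cong\mathcal{F}_{\chi^{(1)}_{ij}}$ is slightly backwards, since that theorem is phrased as a reconstruction result from prescribed data rather than as a statement about an arbitrary given cover; the eigencharacter identity $\chi^{(1)}_{ij}(\sigma_u)=\chi_i(\tau_j^{-1}\sigma_u\tau_j)$ you invoke afterwards is what actually does the work, and that is precisely how the paper argues.
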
 
\begin{proof}
Let $V\subset Y$ be an open set. Then $U_{\chi_i}(V)=\mathcal{F}_{\chi_i}(q^{-1}(V))$ is the set of regular functions $ f\in \pi^{-1}(\mathcal{O}_X(V))$ such that $\sigma_j^*f=\exp(\frac{2\pi \sqrt{-1}\delta_{ij}}{m_i})f$. The relations in the presentation \ref{presentation metabelian} imply that $f\mapsto \tau^*f$ induces the morphism $\tau^*\colon U_i\to U_{\chi^{(1)}_{ij}}$ of $\mathcal{O}_Y$-modules. That $\tau_j^*$ and the last claim both follow also from the last equalities in \ref{presentation metabelian}, namely
$\tau_j^{a_j}=\sigma^{k_{1j}}_1\cdots \sigma^{k_{sj}}_s$. 
\end{proof}
Using Proposition \ref{Uisomorphism}, we can analyze the local behavior of the intermediate branch divisor $D_p$ and the sheaves $U_{\chi}$ as follows: Since $\overline{\tau}_j$ induces a bijection $A^*\to A^*$ by $\chi_i\mapsto \chi^{(1)}_{ij}$ it follows that it also induces $\overline{\tau}_j^*D_{p^0}=D_{p^0}$. So $D_{p^0}=\sum\limits_{\alpha}{(D_{p^0})}_{\alpha}$, which we view as a Weil divisor on $Z^0$, is invariant under the action of $\overline{\tau}_j:Z^0\to Z^0$ induced by $\tau_j$ for every $j$ and hence there exists an effective Cartier divisor $\Delta^j_{p^0}$ such that $(\tau_j^0)^*\Delta_{p^0}^j=D_{p^0}$, where $\tau_j^0=\tau_j|_{Z^0}$. Let $\Delta_{p^0}^{j,\alpha}=(\tau_j^0)^*{(D_{p^0})}_{\alpha}$. We define
\[\Delta_{p}^j=\overline{\Delta_{p^0}^j}, \hspace{0.5cm} \Delta_{p}^{j,\alpha}=\overline{\Delta_{p^0}^{j,\alpha}}.\]
Let $\mu_{\chi, \chi^{\prime}}:U_{\chi}\otimes U_{\chi^{\prime}}\to U_{\chi\chi^{\prime}}$ be the multiplication map. Notice that $U_1=q_*\sO_Z$ so in particular, 
\[\mu_{\chi, \chi^{-1}}:U_{\chi}\otimes U_{\chi^{-1}}\to U_1= \mathcal{O}_Y\oplus L_1\oplus \cdots \oplus L_{t-1},\]
where the $L_i$ are line bundles related to the abelian cover $q$ as in Theorem \ref{structure of metabelian}. Let us consider the multiplication map $\mu_{\chi_i,{(\chi^{(1)}_{ij})}^{-1}}$. By using the isomorphism in Proposition \ref{Uisomorphism}, we obtain a map $\mu_{\chi_i,{(\chi^{(1)}_{ij})}^{-1}}:U_{\chi_{i}}\otimes U_{\chi_i^{-1}}\to U_1=q_*\mathcal{O}_Z=\mathcal{O}_Y\oplus L_1\oplus \cdots \oplus L_{t-1}$. Write $\mu^{\alpha}_{\chi_i,{(\chi^{(1)}_{ij})}^{-1}}=\pr^{\alpha}\circ \mu_{\chi_i,{(\chi^{(1)}_{ij})}^{-1}}:U_{\chi_{i}}\otimes U_{\chi_i^{-1}}\to \sL_{\alpha}$ for $0\leq \alpha \leq t-1.$ We may therefore consider $\mu^{\alpha}_{\chi_i,{(\chi^{(1)}_{ij})}^{-1}}\in \h^0(Y, U_{\chi_{i}}\otimes U_{\chi_i^{-1}}\otimes L^{-1}_{\alpha})$. For each $y\in Y$, let $\mu^{\alpha}_{\chi_i,{(\chi^{(1)}_{ij})}^{-1},y}$ be the stalk of $\mu^{\alpha}_{\chi_i,{(\chi^{(1)}_{ij})}^{-1}}$ at $y$. Note that since $Z$ is normal, the singular locus is of $\codim\geq 2$, so the divisor of zeros of $\mu^{\alpha}_{\chi_i,{(\chi^{(1)}_{ij})}^{-1}}$ is determined by restricting to $Z^0$ and $Y^0$. So we may (and do) assume that $Z$ is smooth. If $u_{\chi_i}, u_{\chi_i^{-1}}$ are basis elements of $U_{\chi_i}$ and $U_{\chi_i^{-1}}$ repectively, then $u_{\chi_i}\otimes u_{\chi_i^{-1}}$ is a basis for $U_{\chi_{i}}\otimes U_{\chi_i^{-1}}$. Note that $U_{\chi_i}$ and $U_{\chi_i^{-1}}$ are considered as regular functions on a neighborhood of $\pi^{-1}(y)$ such that $\sigma_r^*(u_{\chi_i})=\chi_i(\sigma_r)u_{\chi_i}$ (analogous for $u_{\chi_i^{-1}}$). Choose local analytic coordinates such that $Z$ is given locally by the equation $z^t=y_1$. Suppose $e_{\chi_i}$ is basis of ${\sF}_{\chi_i}$ and $e_{\chi^{-1}_i}$ is basis of $\sF_{\chi^{-1}_i}$. Let 
\[u_{\chi_i}=1\cdotp e_{\chi_i}, u_{\chi_i^{-1}}=z\cdotp e_{\chi_i^{-1}}\]
Notice that we have $\tau_j^*(e_{\chi_i^{-1}})=e_{{(\chi^{(1)}_{ij})}^{-1}}$ and $\tau_j^*(z)=\zeta_{a_j}\cdotp z$. Replacing these relations gives \[\mu^{\alpha}_{\chi_i,{(\chi^{(1)}_{ij})}^{-1}}(u_{\chi_i}\otimes u_{\chi_i^{-1}})=e_{\chi_i}\cdotp {\tau_j^*}(e_{{(\chi^{(1)}_{ij})}^{-1}})= \zeta_t\cdotp {(e_{\chi_i}e_{\chi_i^{-1}})}_{\alpha}=zb_{p,\alpha},\]
where $b_{p,\alpha}$ is a local equation of $(D_{p^0})_{\alpha}$ and the last equality is due to \ref{fundamentalrel2}. 
\subsection{Invariants of metabelian covers}
Consider the metabelian cover $\pi \colon X\xrightarrow{p}Z\xrightarrow{q} Y$ with $p,q$ abelian covers of degree $m$ and $t$ respectively. Therefore, using Theorem \ref{singular canonical}, one can compute the invariants of the cover. Indeed, let $\sL^{\prime}_q$ be the reflexive sheaf associated to the abelian cover $q$ as in Theorem \ref{singular canonical} and let $\sF_i=\sF_{\chi_i}$ be the reflexive sheaves in the structure theorem \ref{structure of metabelian}. We have 
\begin{equation}
\begin{aligned}
\pi_*\omega_X=q_*(p_*\omega_X)=q_*(\omega_Z\otimes p_*\mathcal{O}_X) =q_*(q^*(\omega_Y\otimes {\sL^{\prime}}^{t-1}_q)\otimes (\oplus \mathcal{F}_i))= \\
q_*((\oplus \mathcal{F}_i)\otimes q^*(\omega_Y\otimes {\sL^{\prime}}^{t-1}_q))=q_*(\oplus \mathcal{F}_i)\otimes \omega_Y\otimes {\sL^{\prime}}^{t-1}_q=\\
(\oplus U_i)\otimes \omega_Y\otimes {\sL^{\prime}}^{t-1}_q=\oplus (\omega_Y\otimes {\sL^{\prime}}^{t-1}_q\otimes U_i))
\end{aligned}
\end{equation}
In the above, $U_j=q_*(\sF_j)$ and we have used the projection formula together with the fact that $\omega_Z=q^*(\omega_Y\otimes {\sL^{\prime}}^{t-1}_q)$ by Theorem \ref{singular canonical}.
\section*{Results for metacyclic covers} 
As mentioned in Definition \ref{defmetabelian}, A finite  metacyclic group $G$ is an extension of a finite cyclic group by a finite cyclic group, namely the following special case of \ref{metabelian extension}:
\begin{equation} \label{metacyclic extension}
0\to\langle\sigma\rangle\to G\to\langle\tau\rangle\to 0
\end{equation}
As a special case of metabelian groups, metacyclic groups are precisely the groups with the following presentation which also follows directly from presentation \ref{metabelian extension}.
\begin{equation} \label{metacyclic relations}
G=G_{m,k,t,r}\coloneqq \langle \sigma,\tau| \sigma^m=1, \sigma^k=\tau^t, \sigma \tau=\tau\sigma^r\rangle.
\end{equation} 
The numbers $m,k,t\text{ and }r$ are subject to the following conditions
\begin{equation} \label{metacyclic conditions}
r^t\equiv 1 \text{ (mod }m), kr\equiv k \text{ (mod }m) , \text{ and } \gcd(r,m)=1.
\end{equation}
Note that it follows from the presentation \ref{metacyclic relations} that $|G|=mt$. Let us remark that in the presentation \ref{metacyclic relations}, we always mean that $\ord(\sigma)=m$ and $\ord(\overline{\tau})=t$, where $\overline{\tau}$ is the class of $\tau$ in $G/\langle \sigma\rangle$. This is equivalent to saying that $t$ is the smallest positive integer such that $\tau^t\in \langle \sigma\rangle$.  This assumption together with the relations and conditions in \ref{metacyclic relations} and \ref{metacyclic conditions} imply that $\ord(\tau)=\frac{mt}{\gcd(k,m)}$. In particular, $G$ is an abelian (in fact cyclic) group if $\gcd(k,m)=1$.  Since we are mainly interested in non-abelian covers, we may and do henceforth assume that $\gcd(k,m)>1$. Furthermore, we emphasize that the quadruple $(m,k,t,r)$ is \emph{not} an invariant of the group, i.e., it may happen that $G_{m,k,t,r}\cong G_{m^{\prime},k^{\prime},t^{\prime},r^{\prime}}$, but $\{m,k,t,r\}\neq \{m^{\prime},k^{\prime},t^{\prime},r^{\prime}\}$. For instance $G_{36,6,0,19}\cong G_{12,18,0,7}$.
If $t=\ord(\tau)$, then we take $k=m$ and the group $G_{m,k,t,r}$ is called \emph{split}. In this case, $G=AN$ where $A=\langle\sigma\rangle$ and $N=\langle\overline{\tau}\rangle$. \par
Before exploring the metacyclic covers in some detail, let us mention that the structure theorem for metabelian covers, Theorem \ref{structure of metabelian}, takes the followsing special form when the Galois group is metacyclic.
\begin{theorem} (Structure theorem for metacyclic covers) \label{structure of metacyclic}
Let $Y$ be a smooth algebraic variety. The following data determine a $G_{m,k,t,r}$-cover $\pi \colon X\to Y$.
\begin{enumerate}
\item A line bundle $\mathcal{L}$ and an effective reduced divisor $B_q$ such that $\mathcal{L}^{\otimes t}=\mathcal{O}_Y(-B_q)$
\item Reduced effective Weil divisors $D_1\cdots, D_m$ on $Z\coloneqq\spec(\mathcal{O}_Y\oplus \mathcal{L}^{-1}\oplus \cdots \oplus \mathcal{L}^{-(t-1)})$ such that $\overline{\tau}(D_{i})=D_{\overline{ri}}$. Here $\overline{\tau}$ is an automorphism of order $t$ of $q\colon Z\to Y$. 
\item Rank one reflexive sheaves $\mathcal{F}_1,\cdots, \mathcal{F}_m$ on $Z$ flat over $\mathcal{O}_Y$ such that the following relations are satisfied: $\overline{\tau}(\mathcal{F}_{i})=\mathcal{F}_{\overline{ri}}$ and such that $\overline{\tau}^{t}$ acts on the local sections of $\mathcal{F}_{i}$ as multiplication by $\exp(\frac{2\pi \sqrt{-1}k}{m})$.
\end{enumerate}
\end{theorem}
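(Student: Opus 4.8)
The plan is to deduce Theorem \ref{structure of metacyclic} from the general structure theorem for metabelian covers, Theorem \ref{structure of metabelian}, by specializing to the case in which both $A$ and $G/A$ are cyclic and then collapsing the building data of the intermediate cyclic cover to a single line bundle. First I would match the two presentations: writing $A=\langle\sigma\rangle\cong\Z/m\Z$ for the cyclic normal subgroup of $G=G_{m,k,t,r}$ and $N=G/A=\langle\overline{\tau}\rangle\cong\Z/t\Z$ for the cyclic quotient, the metacyclic presentation \ref{metacyclic relations} is exactly the metabelian presentation \ref{presentation metabelian} with one generator in each factor, i.e.\ $s=l=1$, $\sigma_{1}=\sigma$, $\tau_{1}=\tau$, $m_{1}=m$, $a_{1}=t$, and structure constants $r_{111}=r$ and $k_{11}=k$ read off from $\sigma\tau=\tau\sigma^{r}$ and $\tau^{t}=\sigma^{k}$. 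The numerical conditions \ref{metacyclic conditions} are precisely what makes \ref{presentation metabelian} present a group of order $mt$ with $\ord(\overline{\tau})=t$; in particular $r^{t}\equiv 1\pmod m$ and $kr\equiv k\pmod m$ are the requirements that $\tau^{t}=\sigma^{k}$ centralize $A$ compatibly with $\tau^{-1}\sigma\tau=\sigma^{r}$.

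\emph{Item (1).} The intermediate cover $q\colon Z\to Y$ is abelian with cyclic Galois group $N\cong\Z/t\Z$, so I would invoke the theory of cyclic covers (\cite{C10}, or the reduced building data of \cite{P1}): choosing a generator $\eta$ of $N^{*}$ and putting $\mathcal{L}=L_{\eta}$, all the $L_{\eta^{j}}$ and all the fundamental relations \ref{fundamentalrel} for $N$ follow from the single linear equivalence $\mathcal{L}^{\otimes t}\cong\mathcal{O}_{Y}(-B_{q})$ for a reduced effective divisor $B_{q}$ on $Y$, with $Z=\spec\bigl(\bigoplus_{j=0}^{t-1}\mathcal{L}^{-j}\bigr)$ the associated flat $\Z/t\Z$-cover and $\overline{\tau}$ acting on $\mathcal{L}^{-j}$ by $\exp(2\pi\sqrt{-1}\,j/t)$. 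Conversely every pair $(\mathcal{L},B_{q})$ of this kind arises so; hence item (1) of Theorem \ref{structure of metacyclic} is equivalent to item (1) of Theorem \ref{structure of metabelian} for the group $N$.

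\emph{Items (2) and (3).} The characters of $A\cong\Z/m\Z$ are $\chi_{0},\dots,\chi_{m-1}$ with $\chi_{i}(\sigma)=\exp(2\pi\sqrt{-1}\,i/m)$; since $\sigma\tau=\tau\sigma^{r}$ gives $\tau^{-1}\sigma\tau=\sigma^{r}$, one computes $\chi_{i}^{(1)}(\sigma)=\chi_{i}(\sigma^{r})=\chi_{\overline{ri}}(\sigma)$, so $\chi_{i}^{(1)}=\chi_{\overline{ri}}$ and more generally $\chi_{i}^{(\gamma)}=\chi_{\overline{r^{\gamma}i}}$, with $\chi_{i}^{(t)}=\chi_{i}$ because $r^{t}\equiv 1\pmod m$. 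Feeding these identities into items (2) and (3) of Theorem \ref{structure of metabelian}, the conditions $\overline{\tau}_{j}(D_{\chi_{i}})=D_{\chi_{ij}^{(1)}}$ and $\overline{\tau}_{j}^{\gamma}(\mathcal{F}_{\chi_{i}})=\mathcal{F}_{\chi_{ij}^{(\gamma)}}$ become $\overline{\tau}(D_{i})=D_{\overline{ri}}$ and $\overline{\tau}^{t}(\mathcal{F}_{i})=\mathcal{F}_{i}$, while the final clause of Theorem \ref{structure of metabelian}(3) becomes the statement that $\overline{\tau}^{t}$ — which, being a deck transformation of $p$ over $Z$, is the element $\sigma^{k}=\tau^{t}$ of $A$ — acts on the local sections of the $\chi_{i}$-eigensheaf $\mathcal{F}_{i}$ by the scalar $\chi_{i}(\sigma^{k})$ recorded by the exponent $k$. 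Theorem \ref{structure of metabelian} then produces the required $G_{m,k,t,r}$-cover $\pi\colon X\to Y$ with $X=\spec\bigl(\bigoplus_{i}\mathcal{F}_{i}\bigr)$ and intermediate abelian covers $p\colon X\to Z$, $q\colon Z\to Y$.

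I expect the only genuinely delicate points to be the reduction in item (1) — verifying that for cyclic $N$ the $\epsilon^{i}_{\eta\eta'}$-combinations of branch divisors really do assemble into a single reduced divisor $B_{q}$ with $\mathcal{L}^{\otimes t}\cong\mathcal{O}_{Y}(-B_{q})$, and identifying $B_{q}$ with the (reduced) branch locus of $q$ — and, dually, the consistency check that the data $\overline{\tau}(\mathcal{F}_{i})=\mathcal{F}_{\overline{ri}}$ together with the prescribed scalar action of $\overline{\tau}^{t}$ endow $\bigoplus_{i}\mathcal{F}_{i}$ with a genuine $G_{m,k,t,r}$-action rather than merely an action of a larger metabelian group. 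This last check is exactly where the condition $kr\equiv k\pmod m$ from \ref{metacyclic conditions} is used, since it is what forces $\tau^{t}=\sigma^{k}$ to be compatible with $\sigma\tau=\tau\sigma^{r}$, i.e.\ with the last relation of \ref{presentation metabelian}; everything else is a direct substitution into Theorem \ref{structure of metabelian}.
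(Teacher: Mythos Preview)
Your proposal is correct and follows essentially the same approach as the paper: both deduce the metacyclic structure theorem as the $s=l=1$ specialization of Theorem~\ref{structure of metabelian}, with the paper's proof simply stating ``a special case of Theorem~\ref{structure of metabelian}'' and then spelling out the $\mathcal{O}_Z$-algebra structure on $\bigoplus_i\mathcal{F}_i$ via local generators $e_i$ with $e_{\overline{ri}}=\overline{\tau}^*(e_i)$. Your explicit parameter matching and the character computation $\chi_i^{(1)}=\chi_{\overline{ri}}$ make the specialization more transparent than the paper does, but the route is the same.
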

\begin{proof}
A special case of Theorem \ref{structure of metabelian}. However, we remark that in this case, $X\coloneqq\spec(\mathcal{O}_Z\oplus \mathcal{F}_1\oplus \cdots \oplus \mathcal{F}_{m-1})$ and the $\mathcal{O}_Z$-algebra structure is given by the morphisms $\mathcal{F}_i\otimes \mathcal{F}_j\to \mathcal{F}_{\overline{i+j}}$ (Here ${\overline{i+j}}$ is the sum in $\Z/m\Z$) which is uniquely determined by the restriction to $Z^0$ as $Z$ is normal. As $\mathcal{F}_i$ is locally-free on $Z^0$, there are local generators	$e_i$ and we set $e_{\overline{ri}}=\overline{\tau}^*(e_i)$. The algebra structure on the restriction of $\mathcal{O}_Z\oplus \mathcal{F}_1\oplus \cdots \oplus \mathcal{F}_{m-1}$ on this local open subset is given by 
\[e_ie_j=e_{\overline{i+j}}\prod g_s^{\epsilon_{ij}^s},\]
where $g_s$ is the local equation for $D_s$. 
If we choose different generators $\tilde{e_i}$ satisfying the same equations $\tilde{e}_{\overline{ri}}=\overline{\tau}^*(\tilde{e_i})$, then the algebra structure is isomorphic to the above algebra. Due to relations $\overline{\tau}(D_{i})=D_{\overline{ri}}$ one concludes that the morphism $\overline{\tau}^*$ defines a morphism of $\mathcal{O}_Z$-algebras on $\mathcal{O}_Z\oplus \mathcal{F}_1\oplus \cdots \oplus \mathcal{F}_{m-1}$. Finally, the proposed relations at the end of (3) ensure that this defines a morphism which satisfies the generator-relation \ref{metacyclic extension}, namely that $\overline{\tau}^*={\sigma^k}^*$.  	
\end{proof}
\subsection{Complex representations of metacyclic groups} \label{repres of metacyclic}
Complex irreducible representations of finite  metacyclic groups have been determined in \cite{B69}. Let us explain, albeit with slightly different notations, the results of \cite{B69}. Let $U_m=\{z\in\mathbb{C}\mid z^m=1\}$ and consider the map
$\alpha_r\colon U_m\to U_m, \zeta\mapsto\zeta^r$. So the group $\langle\alpha_r \rangle$ acts on $U_m$. For $\zeta\in U_m$, let $t(\zeta)$ be the size of the orbit of $\zeta$ under the action of $\langle\alpha_r \rangle$. Then $t(\zeta)|t$. Let $\{\zeta_1,\cdots, \zeta_s\}$ be a set of representatives of the distinct orbits of $U_m$. Denote the size of an orbit with representaitve $\zeta_i$ by $t(\zeta_i)$ for $i=1,\cdots, s$. Complex irreducible representations are classified as follows. Consider the matrices $T_{\zeta,\theta}\in \gl(t(\zeta), \C)$ given by
\begin{equation} \label{irr.rep metacyclic}
T_{\zeta,\theta}(\sigma)=\begin{bmatrix}
\zeta & & &0\\
 & \zeta^r& &\\
& & \ddots &\\
0& & & \zeta^{r^{t(\zeta)-1}}
\end{bmatrix}, \hspace{3cm} T_{\zeta,\theta}(\tau)=
\begin{bmatrix}
0 & 0 & &  \dots & \theta \\
1 & 0 &  &\dots & 0 \\
0 & 1 & 0&\dots & 0 \\
\vdots &  &\vdots & \ddots & \vdots \\
0 & 0 & \dots & 1 & 0
\end{bmatrix}
\end{equation}
The (inequivalent) complex irreducible representations of $G_{m,k,t,r}$ are precisely $T_{\zeta_i,\theta}\in \gl(t(\zeta_i), \C)$ where $1\leq i\leq s$ and $\theta$ runs over the solutions of $\theta^{\frac{t}{t(\zeta_i)}}=\zeta_i^k$. Therefore the number of (inequivalent) complex irreducible representations of $G_{m,k,t,r}$ is equal to $\nu=\displaystyle t\sum_{i=1}^s\frac{1}{t(\zeta_i)}$. Let $\zeta_i=\xi_m^{l_i}$, where $\xi_m$ is a primitive $m$-th root of unity. We have 
\begin{theorem} \label{metacyclic decomposition}
Let $Y$ be a smooth variety and $\pi\colon X\to Y$ a flat $G_{m,k,t,r}$-cover with $X$ normal and let $p\colon X\to Z$ and $q\colon Z\to Y$ be the intermediate coverings of degrees $m$ and $t$ respectively. Then
\begin{equation}
\pi_*\mathcal{O}_X=\displaystyle\oplus_{i=1}^{\nu} (\pi_*\mathcal{O}_X)_{i},
\end{equation}
where the summands are as follows: If $\zeta_i$ is a representative of an orbit such that $t(\zeta_i)=t$, then 
\begin{equation}
(\pi_*\mathcal{O}_X)_{i}=U_{l_i}\oplus U_{r{l_i}}\oplus\cdots\oplus U_{r^{(t(\zeta_i)-1)}{l_i}},
\end{equation} 
is precisely the eigensheaf associated to the irreducible representation $T_{\zeta_i,\zeta_i^k}$ of \ref{irr.rep metacyclic}, where $U_j=q_*(\mathcal{F}_{\overline{j}}).$ If $t(\zeta_i)<t$, then $(\pi_*\mathcal{O}_X)_{i}$ is the sum of eigensapces comming from metacyclic intermediate covers $X/H\to Y$. 
\end{theorem}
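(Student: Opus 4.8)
The plan is to leverage the factorization $\pi\colon X\xrightarrow{p}Z\xrightarrow{q}Y$ into abelian covers and the decomposition $\pi_*\sO_X=q_*(p_*\sO_X)=q_*(\bigoplus_{\chi\in A^*}\sF_\chi)=\bigoplus_{j}U_j$, which was already established in Section 3, together with the explicit classification of complex irreducible representations of $G_{m,k,t,r}$ from \cite{B69} recalled above. First I would recall that, since $\pi$ is Galois with group $G$, the sheaf $\pi_*\sO_X$ carries a $G$-action and hence decomposes as $\bigoplus_\rho (\pi_*\sO_X)_\rho$ where $\rho$ ranges over the irreducible complex representations of $G$ and $(\pi_*\sO_X)_\rho$ is the $\rho$-isotypic component; this gives the displayed formula with $\nu$ summands. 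The content of the theorem is then to identify each isotypic piece in terms of the sheaves $U_j=q_*(\sF_{\overline{j}})$.

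The key computation is to restrict the $G$-action on $\pi_*\sO_X$ to the normal subgroup $A=\langle\sigma\rangle$: on $\sF_{\overline j}$ the generator $\sigma$ acts by the character $\chi_j$ sending $\sigma\mapsto\xi_m^{j}$, so $U_j$ is exactly the $\xi_m^j$-eigensheaf of $\sigma$ on $\pi_*\sO_X$. By Proposition \ref{Uisomorphism}, $\tau^*$ carries $U_{l_i}$ isomorphically onto $U_{rl_i}$ (using $\chi^{(1)}_{ij}$, which for the metacyclic case sends $\sigma\mapsto\xi_m^{rl_i}$ by the relation $\sigma\tau=\tau\sigma^r$), and iterating, $\tau$ permutes $U_{l_i},U_{rl_i},\dots,U_{r^{t(\zeta_i)-1}l_i}$ cyclically, while $\tau^{t(\zeta_i)}$ — when $t(\zeta_i)=t$ this is $\tau^t=\sigma^k$ — acts on $U_{l_i}$ as multiplication by $\exp(2\pi\sqrt{-1}k/m)=\xi_m^{kl_i}$ after accounting for $\zeta_i=\xi_m^{l_i}$ (strictly, one checks $\sigma^k$ acts on $\sF_{\chi_i}$ by $\xi_m^{kl_i}=\zeta_i^k$, matching the last clause of Theorem \ref{structure of metacyclic}(3)). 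Comparing this with the explicit matrices $T_{\zeta_i,\theta}(\sigma)$ and $T_{\zeta_i,\theta}(\tau)$ in \ref{irr.rep metacyclic}: the block-diagonal action of $\sigma$ with entries $\zeta_i,\zeta_i^r,\dots$ matches the eigenvalues of $\sigma$ on $U_{l_i}\oplus\cdots\oplus U_{r^{t(\zeta_i)-1}l_i}$, and the cyclic permutation matrix for $\tau$ with corner entry $\theta$ matches the permutation of the $U$'s together with the scalar $\zeta_i^k$; hence this direct sum is $G$-stable and isomorphic as a $G$-representation to a direct sum of copies of $T_{\zeta_i,\zeta_i^k}$, i.e. it is the $T_{\zeta_i,\zeta_i^k}$-isotypic component. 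For orbits with $t(\zeta_i)<t$, the stabilizer of $\zeta_i$ in $\langle\alpha_r\rangle$ is nontrivial, so $\tau^{t(\zeta_i)}$ already lies in a proper subgroup; I would factor the relevant representations through the quotient $G\to G/H$ corresponding to an intermediate metacyclic cover $X/H\to Y$ (where $H$ is generated by a suitable power of $\tau$ together with $\sigma^{?}$), and invoke the same analysis one level down, giving that $(\pi_*\sO_X)_i$ is a sum of eigensheaves pulled back from $X/H\to Y$.

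The main obstacle I anticipate is the bookkeeping in the case $t(\zeta_i)<t$: one must correctly identify the subgroup $H\trianglelefteq G$ such that the irreducible representations $T_{\zeta_i,\theta}$ (over the various admissible $\theta$ with $\theta^{t/t(\zeta_i)}=\zeta_i^k$) are precisely those inflated from $G/H$, and verify that the corresponding summand of $\pi_*\sO_X$ really is $(q')_*$ of the structure sheaf of the intermediate cover — this requires matching the conditions \ref{metacyclic conditions} on $(m,k,t,r)$ with the analogous conditions for the smaller metacyclic group, and checking compatibility of the reflexive-sheaf building data under passing to the quotient. The case $t(\zeta_i)=t$ is comparatively clean and is essentially a direct translation between the matrix description \ref{irr.rep metacyclic} and the sheaf-theoretic action via Proposition \ref{Uisomorphism}; I would present that case in full and treat the general orbit by reduction, possibly only sketching the subgroup identification since it is a purely group-theoretic unwinding of the presentation \ref{metacyclic relations}.
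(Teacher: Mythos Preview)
Your approach matches the paper's: both show that the orbit-sum $U_{l_i}\oplus\cdots\oplus U_{r^{t-1}l_i}$ is $G$-stable and carries the representation $T_{\zeta_i,\zeta_i^k}$, and both treat the case $t(\zeta_i)<t$ by descent to an intermediate metacyclic cover $X/H\to Y$ (the paper is just as brief as you are on that part, simply noting that the relevant sections are invariant under a nontrivial subgroup $H$ and hence descend). The one point you pass over with ``i.e.'' is the step from \emph{contained in the isotypic component} to \emph{equal to it}; the paper closes this by a rank count: since $\pi$ is flat, the $\rho_i$-isotypic piece of $\pi_*\sO_X$ is locally free of rank $(\dim\rho_i)^2=t^2$, while each $U_j=q_*\sF_{\overline j}$ has rank $t$ (as $q$ has degree $t$) and there are $t(\zeta_i)=t$ summands in the orbit, so the orbit-sum also has rank $t^2$ and the inclusion is forced to be an equality. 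This rank argument is the paper's actual core step, so you should make it explicit rather than absorbing it into an ``i.e.''.
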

\begin{proof}
If $t(\zeta_i)=t$, then it is clear from the description of the irreducible representations of metacyclic groups \ref{irr.rep metacyclic} that the above sum is in the eigenspace associated with the irreducible representation $\rho_i$. On the other hand, such an eigensheaf is a free $\sO_Y[G_{m,k,t,r}]$-module of rank $(\dim \rho_i)^2$. Since $t(\zeta_i)=t$, the above sum is precisely of rank $t^2=t(\zeta_i)^2=(\dim \rho_i)^2$. Consequently, $(\pi_*\mathcal{O}_X)_{i}$ is precisely the eigensheaf associated to $\rho_i$. If $t(\zeta_i)<t$, then the sections of the line bundles associated to the irreducible representation described in \ref{irr.rep metacyclic}, are invariant under a non-trivial subgroup $H$, hence they descend to regular functions on $X/H$. As any subgroup of a metacyclic group is itself metacyclic, the cover $X/H\to Y$ mentioned above is again a metacyclic cover with Galois group $H$. 
\end{proof}

\begin{example} (Catanese-Perroni \cite{CP}, p.23)
Let $\pi\colon X\to Y$ be a $D_n$-cover. Then if $n$ is odd, there is only one orbit $\{1\}$ in $U_n$ such that $t(1)=1$ and there are $\frac{n-1}{2}$ orbits with $t(\zeta_i)=2$. The sections arising from the bundle associated to this orbit are invariant under the cyclic subgroup $H=\langle \sigma\rangle$, so they descend to sections on $Z$ hence $\pi_*\mathcal{O}_X=\sO_Y\oplus\sL\bigoplus\limits_{i=1}^{\frac{n-1}{2}}(\pi_*\mathcal{O}_X)_i$, where $\sO_Y\oplus\sL=q_*\sO_Z$. If $n$ is even, there are two orbits $\{1\}, \{-1\}$ such that $t(1)=t(-1)=1$ and there are $\frac{n-2}{2}$ orbits with $t(\zeta_i)=2$. The sections arising from the bundle associated to the above mentioned two orbits are invariant under the subgroup $H=\langle\sigma^2,\tau\rangle$, so they descend to sections of the $\Z_2\times \Z_2$-cover $X/H\to Y$. Consequently we have $\pi_*\mathcal{O}_X=\sO_Y\oplus\sL\oplus\sM\oplus\sN\bigoplus\limits_{i=1}^{\frac{n-2}{2}}(\pi_*\mathcal{O}_X)_i$, where the first summands correspond to the intermediate abelian cover $X/H$. 
\end{example}
If $t$ is a prime number, Theorem \ref{metacyclic decomposition} gives more information about $G_{m,k,t,r}$-covers.
\begin{corollary} \label{t is prime}
Let $Y$ be a smooth complex algebraic variety and $\pi\colon X\to Y$ a flat $G_{m,k,t,r}$-cover with $X$ normal and suppose that $p\colon X\to Z$ and $q\colon Z\to Y$ are the intermediate coverings of degrees $m$ and $t$ respectively, where $t$ is a prime number. Then
\begin{equation}
\pi_*\mathcal{O}_X=\displaystyle\oplus_{i=1}^{\nu} (\pi_*\mathcal{O}_X)_{i}.
\end{equation}
The eigensheaves $(\pi_*\mathcal{O}_X)_{i}$ are described as follows: Let $b=\gcd(r-1,m)$ and $h=\frac{m-b}{t}$ (this is an integer).  If $i\geq h$, then 
\begin{equation}
(\pi_*\mathcal{O}_X)_{i}=U_{l_i}\oplus U_{r{l_i}}\oplus\cdots\oplus U_{r^{(t(\zeta_i)-1)}{l_i}},
\end{equation} 
is the eigensheaf associated to the irreducible representation $T_{\zeta_i,\zeta_i^k}$ of \ref{irr.rep metacyclic}, where $U_j=q_*(\mathcal{F}_{\overline{j}}).$
\end{corollary}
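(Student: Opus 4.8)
The plan is to read the statement off from Theorem \ref{metacyclic decomposition}, the only genuinely new input being an explicit description of the $\langle\alpha_r\rangle$-orbits on $U_m$ when $t$ is prime. The direct sum decomposition $\pi_*\mathcal{O}_X=\bigoplus_{i=1}^{\nu}(\pi_*\mathcal{O}_X)_i$ is Theorem \ref{metacyclic decomposition} verbatim, and for any index $i$ with $t(\zeta_i)=t$ that theorem already identifies $(\pi_*\mathcal{O}_X)_i$ with $U_{l_i}\oplus U_{rl_i}\oplus\cdots\oplus U_{r^{t(\zeta_i)-1}l_i}$ and with the eigensheaf of the irreducible representation $T_{\zeta_i,\zeta_i^k}$. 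Hence it suffices to (a) compute $b$ and $h$ and check that $h$ is an integer, and (b) check that the indices $i$ with $t(\zeta_i)=t$ are precisely those falling in the asserted range.

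For (a) I would argue as follows. Recall that $t(\zeta)\mid t$ for every $\zeta\in U_m$; since $t$ is prime, each $\langle\alpha_r\rangle$-orbit on $U_m$ has size $1$ or $t$, so $t(\zeta_i)\in\{1,t\}$ for all $i$. An orbit of size $1$ is a fixed point of $\alpha_r$, i.e.\ a $\zeta\in U_m$ with $\zeta^{r}=\zeta$, equivalently $\zeta^{r-1}=1$; the common solutions of $\zeta^{m}=1$ and $\zeta^{r-1}=1$ are exactly the $\gcd(r-1,m)$-th roots of unity, so there are precisely $b=\gcd(r-1,m)$ of them. Partitioning $U_m$ into its $b$ fixed points and its orbits of size $t$ gives $m=b+t\cdot\#\{\text{orbits of size }t\}$, so the number of full orbits equals $(m-b)/t$; this is automatically an integer and equals $h$.

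For (b), order the orbit representatives $\zeta_1,\dots,\zeta_s$ — and accordingly the irreducible representations and the summands $(\pi_*\mathcal{O}_X)_i$ — so that the $h$ orbits with $t(\zeta_i)=t$ carry the indices in the asserted range. For those indices, the displayed equality and the identification of $(\pi_*\mathcal{O}_X)_i$ with the eigensheaf of $T_{\zeta_i,\zeta_i^k}$ are exactly the $t(\zeta_i)=t$ case of Theorem \ref{metacyclic decomposition}, which proves the corollary. The remaining indices are those with $t(\zeta_i)=1$, so $\zeta_i$ is fixed by $\alpha_r$ and the corresponding representations are one-dimensional; the summands $(\pi_*\mathcal{O}_X)_i$ are then the ones coming from proper intermediate covers of $Y$, as in the final sentence of Theorem \ref{metacyclic decomposition}.

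There is no serious obstacle here: the substance is the elementary orbit count in (a), and the only thing requiring care is the bookkeeping in (b), namely reconciling the enumeration of orbits, of irreducible representations, and of the eigensheaves so that the stated index condition isolates exactly the full orbits. A convenient sanity check along the way is the identity $\sum_i(\dim\rho_i)^2=|G|$, which in the present situation reads $tb\cdot 1^2+h\cdot t^2=mt$. Beyond Theorem \ref{metacyclic decomposition} and the description of the complex irreducible representations of $G_{m,k,t,r}$ recalled above it, no further geometric ingredient enters.
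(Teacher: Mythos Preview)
Your proposal is correct and follows essentially the same route as the paper: count the fixed points of $\alpha_r$ on $U_m$ as the $\gcd(r-1,m)$-th roots of unity, use primality of $t$ to force every remaining orbit to have size $t$, and then invoke Theorem~\ref{metacyclic decomposition}. The paper's proof is terser (it phrases the fixed-point count as $m\mid l_i(r-1)$), and your explicit remark on the bookkeeping in (b) is a helpful clarification of what the index condition actually means.
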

\begin{proof}
By the description of the irrducible representations in \ref{irr.rep metacyclic}, an $\langle\alpha_r\rangle$-orbit with representative $\zeta_i=\xi_m^{l_i}$ satisfies $t(\zeta_i)=1$ if and only if $m|l_i(r-1)$. Therefore there are $b=\gcd(r-1,m)$ orbits with $t(\zeta_i)=1$. As $t$ is a prime number and $t(\zeta_i)|t$, all the other orbits have $t(\zeta_i)=t$ and there are $h=\frac{m-b}{t}$ of these. Theorem \ref{metacyclic decomposition} then yields the remaining assertions.  
\end{proof}
\begin{proposition}
\item The automorphism $\tau\colon X\to X$ induces an isomorphism $\tau^*\colon U_i\to U_{\overline{ri}}$ and $\tau^{*^t}\colon U_i\to U_i$ is the map $f\mapsto \exp(\frac{2\pi \sqrt{-1}k}{m})f$. 
\end{proposition}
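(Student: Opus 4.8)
This is the metacyclic specialization of Proposition~\ref{Uisomorphism}, so the plan is simply to transcribe that argument into the presentation~\ref{metacyclic relations}; let me record the two points explicitly.

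For the first assertion I would recall from the structure theorem~\ref{structure of metacyclic} that $\tau\colon X\to X$ is a deck transformation of $\pi$ covering the deck transformation $\overline{\tau}\colon Z\to Z$ of $q$, and that $\overline{\tau}$ carries $\mathcal{F}_i$ to $\mathcal{F}_{\overline{ri}}$; applying $q_{*}$ and using $U_i=q_{*}\mathcal{F}_i$ then yields the isomorphism $\tau^{*}\colon U_i\to U_{\overline{ri}}$. Concretely, a local section of $U_i$ over $V\subseteq Y$ is a function $f$ on $\pi^{-1}(V)$ with $\sigma^{*}f=\exp(2\pi\sqrt{-1}\,i/m)\,f$, and the relation $\sigma\tau=\tau\sigma^{r}$ of~\ref{metacyclic relations}, read for the automorphisms $\sigma,\tau\in\aut(X)$, forces $\tau^{*}f$ into the eigensheaf indexed by $\overline{ri}$ --- this is where $\gcd(r,m)=1$ from~\ref{metacyclic conditions} is used --- exactly as in the proof of Proposition~\ref{Uisomorphism}. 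That $\tau^{*}$ is a morphism of $\mathcal{O}_Y$-modules, and not merely of sheaves of rings, is because $\pi\circ\tau=\pi$; running the same argument for $\tau^{-1}$ gives the inverse, and iterating $t$ times returns us to $U_i$ since $r^{t}\equiv 1\pmod{m}$.

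For the second assertion I would use the remaining relation $\tau^{t}=\sigma^{k}$ of~\ref{metacyclic relations}: as automorphisms of $X$ this gives $\tau^{*^{t}}=(\tau^{t})^{*}=(\sigma^{k})^{*}=(\sigma^{*})^{k}$, so on a local section $f\in U_i$ it acts as multiplication by the scalar by which $\sigma^{k}$ acts on $\mathcal{F}_i$, namely $\exp(2\pi\sqrt{-1}\,k/m)$. The one thing that is not formal here is that this scalar is a genuine constant and not an arbitrary unit of $\mathcal{O}_Z$: that is exactly the normalization imposed on the building data in part~(3) of Theorem~\ref{structure of metacyclic}, and it is where the (possibly non-split) extension~\ref{metacyclic extension} --- carried by the exponent $k$ --- enters.

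I do not expect a serious obstacle: the argument is bookkeeping layered on the structure theorem, and the only steps requiring care are (a) the $\mathcal{O}_Y$-linearity of $\tau^{*}$, which rests on $\tau$ being a $Y$-automorphism, and (b) the constancy of the scalar representing $\tau^{*^{t}}$, which is forced by part~(3) of the structure theorem rather than by anything geometric. Everything else is the eigenvalue matching already carried out for general metabelian covers in Proposition~\ref{Uisomorphism}.
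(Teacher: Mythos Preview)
Your proposal is correct and follows essentially the same route as the paper's own proof: interpret sections of $U_i$ as $\sigma$-eigenfunctions with eigenvalue $\exp(2\pi\sqrt{-1}\,i/m)$, use $\sigma\tau=\tau\sigma^r$ to see that $\tau^*$ lands in $U_{\overline{ri}}$, and use $r^t\equiv 1\pmod m$ together with $\tau^t=\sigma^k$ for the second assertion. One small remark: the constancy of the scalar in the second part does not actually require invoking item~(3) of the structure theorem---it is automatic from the eigenfunction description, since $(\sigma^k)^*f=(\sigma^*)^k f$ is already a constant multiple of $f$; your appeal to the building data there is extra commentary rather than a needed step.
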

\begin{proof}
Let $V\subset Y$ be an open set. Then $U_i(V)=\mathcal{F}_i(q^{-1}(V))$ is the set of regular functions $ f\in \pi^{-1}(\mathcal{O}_X(V))$ such that $\sigma^*f=\exp(\frac{2\pi \sqrt{-1}i}{m})f$. The relation $\sigma\tau=\tau\sigma^r$ implies that $f\mapsto \tau^*f$ induces a morphism $\tau^*\colon U_i\to U_{\overline{ri}}$ of $\mathcal{O}_Y$-modules. The last claim follows since $r^t\equiv 1$ (mod $m$) and $\tau^t=\sigma^k$. 
\end{proof}
\subsection{Function fields of metacyclic covers}
Consider a metacyclic Galois covering $f\colon X\to Y$ with the Galois group $G_{m,k,t,r}$ defined as above with the intermediate cyclic coverings $p\colon X\to Z$ and $q\colon X\to Z$ of degrees $m$ and $t$ respectively. Recall from Sction~\ref{Kummer}, these morphisms give rise to field extension $\C(Y)\subset \C(Z)\subset \C(X)$ of the corresponding function fields. Let $A=\langle\sigma\rangle$ and $N=\langle\overline{\tau}\rangle$ with $\text{ord}(\sigma)=m$ and $\text{ord}(\overline{\tau})=t$. Then
\begin{gather}
\C(X)=\C(Z)(\sqrt[m]{g})=\C(Z)(v), g\in \C(Z), \sigma\cdotp v=\zeta_{m}\cdotp v,\\
\C(Z)=\C(Y)(\sqrt[t]{f})=\C(Y)(w), f\in \C(Y), \overline{\tau}.w=\zeta_{t}\cdotp w
\end{gather}
where $\zeta_{m}$ (resp. $\zeta_{t}$) denotes a primitve $m$-th (resp. $t$-th) root of unity. In particular, $\C(X)=\C(Y)(v,w)$. In this subsection we describe the fucntion fields and the extensions above following \cite{CP}, \cite{P1} and \cite{Tok02}.  
\begin{proposition} \label{fieldequations}
Let $\pi\colon X\to Y$ be a metacyclic Galois covering with the Galois group $G_{m,k,t,r}$ defined as above with the intermediate cyclic coverings $p\colon X\to Z$ and $q\colon Z\to Y$. Then the following hold:
\begin{enumerate}
\item There exists $\alpha\in \C(Z)$, such that $\tau(x)=\alpha x^r$.
\item Let $c=\frac{r^t-1}{m}$. Then $g^c=\frac{\xi_m^k}{\prod\limits_{i=1}^t \overline{\tau}^{t-i}(\alpha)\alpha^{r^{t-1}}}$. 
\end{enumerate}
\end{proposition}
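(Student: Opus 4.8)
The plan is to extract both identities from the presentation \ref{metacyclic relations} together with the Kummer description recalled just above, namely $\C(X)=\C(Z)(x)$ with $x^{m}=g\in\C(Z)$ and $\sigma(x)=\zeta_{m}x$ (here $x$ is the generator written $v$ above). Throughout, $\sigma$ and $\tau$ denote the induced automorphisms of $\C(X)$, with $\sigma$ fixing $\C(Z)=\C(X)^{\langle\sigma\rangle}$ (as $p\colon X\to Z$ is cyclic Galois with group $\langle\sigma\rangle$) and $\tau$ restricting on $\C(Z)$ to $\overline{\tau}$. For part (1): since $\zeta_{m}\in\C$ is fixed by $G$ we have $\sigma^{r}(x)=\zeta_{m}^{r}x$, so applying $\sigma\tau=\tau\sigma^{r}$ to $x$ gives
\[
\sigma\bigl(\tau(x)\bigr)=\tau\bigl(\sigma^{r}(x)\bigr)=\zeta_{m}^{r}\,\tau(x);
\]
since also $\sigma(x^{r})=\zeta_{m}^{r}x^{r}$, the quotient $\alpha:=\tau(x)/x^{r}$ is $\sigma$-invariant and nonzero, hence $\alpha\in\C(Z)^{*}$, which is precisely the assertion $\tau(x)=\alpha x^{r}$.

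For part (2) the idea is to iterate part (1) and then invoke $\tau^{t}=\sigma^{k}$. First I would prove by induction on $j\ge 1$ that
\[
\tau^{j}(x)=\beta_{j}\,x^{r^{j}},\qquad \beta_{j}:=\prod_{i=0}^{j-1}\overline{\tau}^{\,j-1-i}(\alpha)^{r^{i}}\ \in\C(Z),
\]
the base case being part (1) and the inductive step reducing, via $\tau^{j+1}(x)=\overline{\tau}(\beta_{j})\,\tau(x)^{r^{j}}=\overline{\tau}(\beta_{j})\,\alpha^{r^{j}}x^{r^{j+1}}$, to the elementary identity $\overline{\tau}(\beta_{j})\,\alpha^{r^{j}}=\beta_{j+1}$. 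Taking $j=t$ and using $\tau^{t}(x)=\sigma^{k}(x)=\zeta_{m}^{k}x$ (from $\sigma^{k}=\tau^{t}$), then dividing by $x$, one gets
\[
x^{r^{t}-1}=\frac{\zeta_{m}^{k}}{\prod_{i=0}^{t-1}\overline{\tau}^{\,t-1-i}(\alpha)^{r^{i}}}.
\]
By \ref{metacyclic conditions} we have $r^{t}\equiv 1\pmod m$, so $c=(r^{t}-1)/m$ is an integer and $x^{r^{t}-1}=(x^{m})^{c}=g^{c}$; reindexing the product by $i\mapsto i-1$ turns the denominator into $\prod_{i=1}^{t}\overline{\tau}^{\,t-i}(\alpha)^{r^{i-1}}$, which is the asserted identity (with $\zeta_{m}=\xi_{m}$).

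The only genuine labour is the bookkeeping in the induction of part (2): keeping each twist $\overline{\tau}^{\,j-1-i}$ matched with its exponent $r^{i}$ in the ``cocycle'' $\beta_{j}$, and checking the recursion $\beta_{j+1}=\overline{\tau}(\beta_{j})\alpha^{r^{j}}$. The one point requiring attention is to fix, once and for all, the direction of the $G$-action on $\C(X)$ that is compatible with the statement $\tau(x)=\alpha x^{r}$; the opposite convention would merely replace $r$ by $r^{-1}\bmod m$ throughout. Beyond this there is no conceptual obstacle: part (1) is a one-line eigenvector computation and part (2) is a telescoping of it against the relation $\tau^{t}=\sigma^{k}$.
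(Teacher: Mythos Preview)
Your proposal is correct and follows the same approach as the paper: for (1) you verify that $\tau(x)/x^{r}$ is $\sigma$-invariant (the paper phrases this as computing $\sigma(x^{-r}\tau(x))$), and for (2) you iterate (1) to express $\tau^{t}(x)$ and then invoke $\tau^{t}=\sigma^{k}$, exactly as the paper does. Your write-up is in fact more careful than the paper's, which simply asserts the closed form for $\tau^{t}(x)$ without spelling out the induction; your explicit cocycle $\beta_{j}=\prod_{i=0}^{j-1}\overline{\tau}^{\,j-1-i}(\alpha)^{r^{i}}$ and the recursion $\beta_{j+1}=\overline{\tau}(\beta_{j})\alpha^{r^{j}}$ make the telescoping transparent, and your reindexed denominator $\prod_{i=1}^{t}\overline{\tau}^{\,t-i}(\alpha)^{r^{i-1}}$ is the correct reading of the formula in the statement.
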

\begin{proof}
\begin{enumerate}
\item Consider the element $x^{-r}\tau(x)$. It holds that: 
\[\sigma(x^{-r}\tau(x))=\sigma(x^{-r})\sigma\tau(x)=\sigma(x^{-r})\tau\sigma^r(x)=\xi_m^{-r} x^{-r}\tau(\xi_m^r x)=x^{-r}\tau(x).\]
It follows that $x^{-r}\tau(x)$ is invariant under $\sigma$, i.e., $x^{-r}\tau(x)\in \C(X)^{\langle\sigma \rangle}=\C(Z)$. This proves the first claim.  
\item Using the formula in (1) with $\alpha\in \C(Z)$, one calculates that 
\[\xi_m^kx=\sigma^k(x)=\tau^t(x)=\prod_{i=1}^t\tau^{t-i}(\alpha)\alpha^{r^{t-1}}x^{r^t}=\prod_{i=1}^t \overline{\tau}^{t-i}(\alpha)\alpha^{r^{t-1}}g^c x.\] 
Which settles (2). 
\end{enumerate}	
\end{proof}
\begin{lemma}
\label{tau powers}
\begin{enumerate}
\item If $h\in \mathbb{C}(Z)$, then
$F\coloneqq h\tau(h)\tau^2(h)\cdots\tau^{t-1}(h)\in \mathbb{C}(Y)$.
\item Let $P\coloneqq x\tau(x)\tau^2(x)\cdots\tau^{t-1}(x)$. Then there exists $u\in \N$ with $u|\gcd(r-1,m)$ such that
$P^{u}\in \mathbb{C}(Y)$.
\end{enumerate}
\end{lemma}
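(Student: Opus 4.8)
The plan is to reduce both statements to bookkeeping on how the generators $\sigma,\tau$ of $G=G_{m,k,t,r}$ act on the two products, and then to read off invariance from the relations \ref{metacyclic relations} and the numerical conditions \ref{metacyclic conditions}. Throughout I will use that $\C(X)/\C(Y)$ is Galois with group $G=\langle\sigma,\tau\rangle$, that $\C(Z)=\C(X)^{\langle\sigma\rangle}$ and $\C(Y)=\C(Z)^{\langle\overline\tau\rangle}=\C(X)^G$, and that $x$ denotes the generator $v$ of the top cyclic extension, so $\sigma(x)=\zeta_m x$, while $\tau^t=\sigma^k$ (hence $\tau^t(x)=\zeta_m^k x$) and $\overline\tau^{\,t}=\id$ on $\C(Z)$.

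For (1), I would first note that since $\langle\sigma\rangle$ is normal in $G$, the automorphism $\tau$ preserves $\C(X)^{\langle\sigma\rangle}=\C(Z)$ and acts there through $\overline\tau\in N$; in particular each $\tau^j(h)\in\C(Z)$, so $F\in\C(Z)$. It then remains to check $\overline\tau$-invariance: applying $\overline\tau$ and using $\overline\tau^{\,t}=\id$ on $\C(Z)$ gives $\overline\tau(F)=\tau(h)\cdots\tau^{t-1}(h)\,\tau^t(h)=\tau(h)\cdots\tau^{t-1}(h)\,h=F$, so $F\in\C(Z)^{\langle\overline\tau\rangle}=\C(Y)$. (Concretely, $F$ is exactly $N_{\C(Z)/\C(Y)}(h)$.)

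For (2), I would compute the two scalar multipliers of $P=x\,\tau(x)\cdots\tau^{t-1}(x)$. From $\sigma\tau=\tau\sigma^r$ one gets $\sigma\tau^j=\tau^j\sigma^{r^j}$ by induction, hence $\sigma(\tau^j(x))=\zeta_m^{r^j}\tau^j(x)$ and therefore $\sigma(P)=\zeta_m^{S}P$ with $S:=1+r+\dots+r^{t-1}$. On the other hand $\tau(P)/P=\tau^t(x)/x=\zeta_m^{k}$, so $\tau(P)=\zeta_m^{k}P$. Thus $P^{u}$ is fixed by both $\sigma$ and $\tau$ — equivalently $P^{u}\in\C(X)^G=\C(Y)$ — precisely when $m\mid uS$ and $m\mid uk$. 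It therefore suffices to verify that $u:=b:=\gcd(r-1,m)$ satisfies both divisibilities, since then $u\mid\gcd(r-1,m)$ is automatic; the minimal admissible exponent is then $\operatorname{lcm}\!\bigl(m/\gcd(m,S),\,m/\gcd(m,k)\bigr)$, which by the same computation divides $b$.

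The one step that needs care — and the only genuine obstacle — is this final divisibility check, a small elementary-number-theory lemma. Write $r-1=be$ and $m=bm'$; from $\gcd(be,bm')=\gcd(r-1,m)=b$ one gets $\gcd(e,m')=1$. The condition $kr\equiv k\pmod m$ says $m\mid k(r-1)=kbe$, i.e.\ $m'\mid ke$, whence $m'\mid k$ and so $m=bm'\mid bk$. The condition $r^t\equiv 1\pmod m$ says $m\mid r^t-1=S(r-1)=Sbe$, i.e.\ $m'\mid Se$, whence $m'\mid S$ and $m\mid bS$. (If $r=1$ the identity $S(r-1)=r^t-1$ still holds — both sides vanish — and $b=m$, so the claim is trivial in that degenerate case.) Hence $P^{\,b}\in\C(Y)$ with $b=\gcd(r-1,m)$, which is the assertion.
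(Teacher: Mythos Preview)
Your proof of part (1) is essentially identical to the paper's. For part (2), your argument is correct but follows a genuinely different route. The paper does not compute the $\sigma$-eigenvalue of $P$ at all; instead it reuses part (1) twice. First, with $h=\alpha=x^{-r}\tau(x)\in\C(Z)$ (the element from Proposition~\ref{fieldequations}), the norm product telescopes to $\xi_m^{k}P^{-(r-1)}$, so $P^{r-1}\in\C(Y)$. Second, with $h=x^m=g\in\C(Z)$, part (1) gives $P^m\in\C(Y)$. Taking $u$ minimal with $P^u\in\C(Y)$ then forces $u\mid\gcd(r-1,m)$. Your approach is more self-contained: it bypasses Proposition~\ref{fieldequations} entirely, tracks both generators explicitly via $\sigma(P)=\zeta_m^{S}P$ and $\tau(P)=\zeta_m^{k}P$, and then invokes the numerical constraints \ref{metacyclic conditions} to show $m\mid bS$ and $m\mid bk$ for $b=\gcd(r-1,m)$. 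This even yields the exact minimal exponent $\operatorname{lcm}\!\bigl(m/\gcd(m,S),\,m/\gcd(m,k)\bigr)$, information the paper's argument does not extract. The paper's route, on the other hand, makes the role of part (1) as a norm-type tool more visible and avoids the small number-theory lemma.
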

\begin{proof}
\begin{enumerate}
\item Since $h\in \mathbb{C}(Z), F=h\tau(h)\tau^2(h)\cdots\tau^{t-1}(h)=h\overline{\tau}(h)\overline{\tau}^2(h)\cdots\overline{\tau}^{t-1}(h)\in
\mathbb{C}(Z)$. It therefore suffices to show that this element is
invariant under the action of $\tau$. Indeed
\begin{equation}
\label{mult of powers}
\tau(F)=\tau(h)\tau^2(h)\cdots\tau^{t-1}(h)\tau^{t}(h).
\end{equation}
But $\tau^{t}=\sigma^k$ and so $\tau^{t}(h)=\sigma^k(h)=h$,
because $h\in \mathbb{C}(Z)$ by assumption and hence it is
invariant under $\sigma$.
\item By Proposition \ref{fieldequations}, $\alpha=x^{-r}\tau(x)\in\mathbb{C}(Z)$ so using \ref{mult of powers} above, we have
\[\alpha\tau(\alpha)\tau^2(\alpha)\cdots\tau^{t-1}(\alpha)=\xi_m^k(x\tau(x)\tau^2(x)\cdots\tau^{t-1}(x))^{-(r-1)}\in\mathbb{C}(Y).\]
Note that $\tau^{t}(x)=\sigma^k(x)=\xi_m^kx$. For the last
assertion, notice that $x^m=g\in \mathbb{C}(Z)$ and it follows by
setting $h=x^m$ in \ref{mult of powers} that
\[h\tau(h)\tau^2(h)\cdots\tau^{t-1}(h)=(x\tau(x)\tau^2(x)\cdots\tau^{t-1}(x))^{m}\in\mathbb{C}(Y).\]
Now take $u$ to be the smallest positive integer such that $(x\tau(x)\tau^2(x)\cdots\tau^{t-1}(x))^{u}\in\mathbb{C}(Y)$. By the minimality of $u$, it follows that $u|\gcd(r-1,m)$.
\end{enumerate}
\end{proof}
In the special case $t=2$, the above results can help to determine
the structure of metacyclic extensions.
\begin{proposition}
Let $\C(Y)\subset \C(X)$ be a $G_{m,k,2,r}$- extension. There exist $a\in \C(Y)$ and $x,P\in \C(X)$ with
\[x^{2m}-2ax^m+P^m=0,\]
where $P\in \C(X)$ is such that there exists $u|\gcd(r-1,m)$ with $P^u\in \C(Y)$ (in particular, $P^m\in \C(Y)$) and $\C(X)=\C(Y)(x)$ and the $G_{m,k,2,r}$-action is given by $\sigma(x)=\xi_{m}x$ where $\xi_{m}$ is a primitive $m$-th root of unity in $\C$ and $\tau(x)=\frac{P}{x}$.\par 
Conversely, given $m,k,r$ satisfying \ref{metacyclic conditions} (with $t=2$) and $a\in \C(Y)$, if $P=\alpha x^{r+1}$ is such that $\alpha\in \C(Y)[x^m]$ and $x^{2m}-2ax^m+P^m$ is irreducible (in $\C(Y)[x]$) then $\frac{\C(Y)[x]}{(x^{2m}-2ax^m+P^m)}$ is a $G_{m,k,2,r}$- Galois extension of $\C(Y)$ with the $G_{m,k,2,r}$- action given by $\sigma(x)=\xi_m x$ and $\tau(x)=\frac{P}{x}=\alpha x^{r}$.
\end{proposition}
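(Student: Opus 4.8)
The plan is to analyze the $G_{m,k,2,r}$-extension $\C(Y)\subset\C(X)$ via the intermediate cyclic degree-$m$ cover $p\colon X\to Z$, where $Z=X/\langle\sigma\rangle$. By Kummer theory (Section~\ref{Kummer}) there exists $x\in\C(X)$ with $x^m=g\in\C(Z)$ and $\sigma(x)=\xi_m x$; since $t=2$ and $\C(Z)=\C(X)^{\langle\sigma\rangle}$, I would first check $\C(X)=\C(Y)(x)$ (the degree of $x$ over $\C(Y)$ is $mt/\gcd$, which under our running assumption equals $2m$ except in degenerate cases — here one uses that $t=2$ is the order of $\overline\tau$). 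Then Proposition~\ref{fieldequations}(1) gives $\alpha\in\C(Z)$ with $\tau(x)=\alpha x^r$, and applying $\tau$ again, $\tau^2(x)=\tau(\alpha)\,\alpha^r x^{r^2}=\xi_m^k x$ since $\tau^2=\sigma^k$. Setting $P\coloneqq x\,\tau(x)$, Lemma~\ref{tau powers}(2) (with $t=2$) furnishes $u\mid\gcd(r-1,m)$ with $P^u\in\C(Y)$, hence in particular $P^m\in\C(Y)$.

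Next I would produce the stated quadratic-type equation. The key observation is that $x^m$ and $\tau(x^m)=\tau(x)^m$ are the two conjugates of $x^m=g$ under $N=\gal(\C(Z)/\C(Y))=\langle\overline\tau\rangle$, so they are the roots of a monic quadratic over $\C(Y)$:
\[
T^2-\big(x^m+\tau(x)^m\big)T+x^m\tau(x)^m=0.
\]
Here $x^m\tau(x)^m=(x\tau(x))^m=P^m\in\C(Y)$, and $a\coloneqq\tfrac12\big(x^m+\tau(x)^m\big)\in\C(Y)$ since it is $\overline\tau$-invariant and lies in $\C(Z)$. Substituting $T=x^m$ yields $x^{2m}-2ax^m+P^m=0$, and $\tau(x)=\alpha x^r$ with $\tau(x)x=P$ gives $\tau(x)=P/x$. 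This establishes the first (direct) half of the Proposition, modulo the verification that $\sigma,\tau$ so defined do generate a faithful $G_{m,k,2,r}$-action, which is immediate from $\sigma(x)=\xi_m x$, $\tau(x)=\alpha x^r$, and the computation $\tau^2(x)=\xi_m^k x=\sigma^k(x)$ together with $\sigma\tau(x)=\xi_m^r\alpha x^r=\tau\sigma^r(x)$.

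For the converse I would start from $m,k,r$ satisfying \ref{metacyclic conditions} with $t=2$, from $a\in\C(Y)$, and from $P=\alpha x^{r+1}$ with $\alpha\in\C(Y)[x^m]$ such that $F(x)\coloneqq x^{2m}-2ax^m+P^m$ is irreducible over $\C(Y)[x]$. Define $E\coloneqq\C(Y)[x]/(F)$. Irreducibility makes $E$ a field of degree $2m$ over $\C(Y)$. I would define $\sigma$ and $\tau$ on $E$ by $\sigma(x)=\xi_m x$ and $\tau(x)=P/x=\alpha x^r$ (noting $P^m\in\C(Y)$, so $P/x\in E$ and $\alpha x^r$ makes sense since $\alpha\in\C(Y)[x^m]\subset E$), then check: these are well-defined $\C(Y)$-algebra automorphisms (i.e. they kill $F$), they satisfy $\sigma^m=\id$, $\sigma\tau=\tau\sigma^r$, and $\tau^2=\sigma^k$ — the last using the condition $r^2\equiv1\pmod m$ to compute $\tau^2(x)=\tau(\alpha)\alpha^r x^{r^2}$ and the condition $kr\equiv k\pmod m$ to match it with $\xi_m^k x$; one also checks $\alpha\tau(\alpha)=\xi_m^k x^{-(r^2-1)}$, which is forced by the relation $x^m\tau(x)^m=P^m$. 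Thus $\langle\sigma,\tau\rangle\hookrightarrow\aut(E/\C(Y))$ has order (at least, hence exactly) $2m=[E:\C(Y)]$, so $E/\C(Y)$ is Galois with group $G_{m,k,2,r}$, and taking $X$ with $\C(X)=E$ over $Y$ gives the desired cover.

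The main obstacle I expect is bookkeeping in the converse: ensuring that the assignments $\sigma(x)=\xi_m x$, $\tau(x)=\alpha x^r$ genuinely extend to automorphisms of $E$ (equivalently, that $F(\xi_m x)$ and $F(\alpha x^r)$ vanish in $E$), and that the number-theoretic conditions \ref{metacyclic conditions} are exactly what make $\tau^2=\sigma^k$ hold as an identity in $E$ rather than merely modulo lower-order relations. This requires carefully exploiting $\alpha\in\C(Y)[x^m]$ so that $\alpha$ commutes appropriately with the substitutions and that $P^m$ — but not necessarily $P$ itself — lies in the base field; the subtlety that $P\in\C(X)$ only satisfies $P^u\in\C(Y)$ for some $u\mid\gcd(r-1,m)$, not $P\in\C(Y)$, is exactly what keeps the group non-abelian and must be tracked throughout.
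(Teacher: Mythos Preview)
Your proposal is correct and follows essentially the same route as the paper. The only cosmetic difference is in how the quadratic relation is obtained: the paper writes $x^m=g=a+z$ in the explicit $\C(Y)$-basis $\{1,z\}$ of $\C(Z)$ (with $z^2=f\in\C(Y)$ and $\overline{\tau}(z)=-z$) and computes $P^m=(a+z)(a-z)=a^2-f$ directly, whereas you phrase the same computation as taking the trace and norm of $x^m$ in the quadratic extension $\C(Z)/\C(Y)$; these are identical, and your $a=\tfrac12(x^m+\tau(x)^m)$ coincides with the paper's $a$. For the converse the paper gives only a one-line verification, so your more detailed outline of why $\sigma,\tau$ extend to automorphisms of $\C(Y)[x]/(F)$ satisfying the metacyclic relations is a welcome expansion rather than a departure.
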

\begin{proof}
Let $P\coloneqq x\tau(x)\in \C(X)$ and $u\in\N$ be the smallest positive integer such that $P^u\in\C(Y)$ as in Lemma \ref{tau powers}. By Lemma \ref{tau powers} (2), $u|\gcd(r-1,m)$. In particular $P^m\in\C(Y)$. More explicitely, since $t=2$, we may write $x^m=g=a+z$ with $a\in \C(Y)$ and we have
\[P^m=(x\tau(x))^{m}=x^{m}\overline{\tau}(x^{m})=(a+z)(a-z)=a^2-z^2=a^2-f.\]
So $f=a^2-P^m$. On the other hand, $x^{m}=a+z$ and the above calculation gives
\[x^{2m}-2ax^m+P^m=0.\]
Of course, by Proposition \ref{fieldequations}, we know that $P=\alpha x^{r+1}$ with $\alpha \in \C(Z)$.  

For the converse statement, one checks that by the choice of $P$, there is an action of $G_{m,k,2,r}$ on $\C(X)=\frac{\C(Y)[x]}{(x^{2m}-2ax^m+P^m)}$. This action is Galois as the latter contains all of the conjugates of $x$ under $G_{m,k,2,r}$.  
\end{proof}
\subsection*{Special case: dicyclic Covers}
The so-called dicyclic group is in our compact notation the metacyclic group $G_{2n,n,2,-1}$, where $n\in \Z$. They are therefore precisely the extensions of $\Z_2$ by cyclic groups. 
In this case as in the proof of ~\ref{fieldequations} (1), we see that since $t=2$, the element $x\tau(x)\in \C(X)^{\langle\sigma \rangle}=\C(Z)$
and hence $x\tau(x)=a+bz$ with $a,b\in \C(Y)$. Futhermore, we have $\overline{\tau}(x\tau(x))=-x\tau(x)$ which forces $a=0$, and we may assume $b=1$. That is
$x\tau(x)=z$ or $\tau(x)=\frac{z}{x}$. In particular, $\tau(x)^2=\frac{f}{x^2}$ with $z^2=f\in \C(Y)$. 
Now, $x^{2n}=g\in \C(Z)$ and hence $x^{2n}=g=c+dz$ with $c,d\in \C(Y)$. 
\[f^n=z^{2n}=x^{2n}\tau(x^{2n})=x^{2n}\overline{\tau}(x^{2n})=x^{2n}(c-dz)=(c+dz)(c-dz)=c^2-d^2f\]
On the other hand, $x^{2n}=g=c+dz$ together with the above computations imply 
\[x^{4n}-2cx^{2n}+c^2-d^2f=x^{4n}-2cx^{2n}+f^n=0,\]
with $c,d,f\in \C(Y)$. We summarize this discussion in the following 
\begin{corollary}
Let $\C(Y)\subset \C(X)$ be a $G_{2n,n,2,-1}$- extension. Then there exists $c,f\in \C(Y)$ and $x\in \C(X)$ such that $\C(X)=\C(Y)(x)$ and
\[x^{4n}-2cx^{2n}+c^2-d^2f=x^{4n}-2cx^{2n}+f^n=0,\]
where $\sigma(x)=\xi_{2n}x$ and $\tau(x)^2=\frac{f}{x^2}$ and $\xi_{2n}$ is a primitive $2n$-th root of unity in $\C$. Conversely, given $c,f\in \C(Y)$ and $x\in \C(X)$ such that $x^{4n}-2cx^{2n}+f^n$ is irreducible in $\C(Y)[x]$, then $\frac{\C(Y)[x]}{(x^{4n}-2cx^{2n}+f^n)}$ is a $G_{2n,n,2,-1}$- extension of $\C(Y)$ with the group action described above. 
\end{corollary}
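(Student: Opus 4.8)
The forward implication is, in essence, the computation in the paragraph preceding the statement; I would present it as follows. Factor $\pi$ as $X\xrightarrow{p}Z\xrightarrow{q}Y$ with $p$ cyclic of degree $2n$ (Galois group $\langle\sigma\rangle$) and $q$ cyclic of degree $2$ (Galois group $\langle\overline{\tau}\rangle$), and use the Kummer descriptions recalled in~\ref{Kummer} to write $\C(Z)=\C(Y)(z)$ with $z^{2}=f\in\C(Y)$, $\overline{\tau}(z)=-z$, and $\C(X)=\C(Z)(x)$ with $x^{2n}=g\in\C(Z)$, $\sigma(x)=\xi_{2n}x$. Since $r=-1$, the argument of Proposition~\ref{fieldequations}(1) shows $\sigma\bigl(x\tau(x)\bigr)=x\tau(x)$, so $x\tau(x)\in\C(X)^{\langle\sigma\rangle}=\C(Z)$; writing $x\tau(x)=a+bz$ and applying $\overline{\tau}$ (using $\tau^{2}=\sigma^{n}$ and $\xi_{2n}^{n}=-1$ to get $\overline{\tau}(x\tau(x))=-x\tau(x)$) forces $a=0$, and after rescaling $z$ we may take $b=1$, so $\tau(x)=z/x$ and $\tau(x)^{2}=f/x^{2}$. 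Then, writing $g=x^{2n}=c+dz$ with $c,d\in\C(Y)$, one gets $f^{n}=z^{2n}=(x\tau(x))^{2n}=x^{2n}\overline{\tau}(x^{2n})=(c+dz)(c-dz)=c^{2}-d^{2}f$, while squaring $x^{2n}-c=dz$ gives $x^{4n}-2cx^{2n}+c^{2}-d^{2}f=0$, which becomes the displayed equation once $c^{2}-d^{2}f=f^{n}$ is inserted. Finally $d\neq 0$ (otherwise $x$ would satisfy a polynomial of degree $\le 2n<4n=|G|$ over $\C(Y)$), so $z=(x^{2n}-c)/d\in\C(Y)(x)$ and hence $\C(X)=\C(Y)(x)$.

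For the converse, set $K:=\C(Y)[x]/(x^{4n}-2cx^{2n}+f^{n})$, a field of degree $4n=|G_{2n,n,2,-1}|$ over $\C(Y)$ by the irreducibility hypothesis. The plan has four steps. (i) Produce $z\in K$ with $z^{2}=f$: irreducibility forces $\delta:=c^{2}-f^{n}$ to be a non-square in $\C(Y)$ and $w:=x^{2n}-c$ to satisfy $w^{2}=\delta$, so choosing $d\in\C(Y)$ with $d^{2}f=\delta$ (equivalently $(c^{2}-f^{n})/f\in(\C(Y)^{\ast})^{2}$ --- which is exactly what the condition $\alpha\in\C(Y)[x^{m}]$ of the preceding general Proposition on $G_{m,k,2,r}$-extensions amounts to here), put $z:=w/d$. (ii) Define $\C(Y)$-algebra maps on $K$ by $\sigma(x)=\xi_{2n}x$ and $\tau(x)=z/x$, checking in each case that the image of $x$ is again a root of $x^{4n}-2cx^{2n}+f^{n}$ --- for $\tau$ this is the identity $(z/x)^{4n}-2c(z/x)^{2n}+f^{n}=(f^{n}/x^{4n})\bigl(x^{4n}-2cx^{2n}+f^{n}\bigr)=0$ --- so that $\sigma,\tau$ extend to $\C(Y)$-automorphisms. (iii) Verify $\sigma^{2n}=\id$, $\tau^{2}=\sigma^{n}$ and $\sigma\tau=\tau\sigma^{-1}$ by short computations on the generators $x,z$, using $\sigma(z)=z$, $\tau(z)=-z$ and $\xi_{2n}^{n}=-1$, and note that $\tau\notin\langle\sigma\rangle$ (it moves $x^{2n}$ to the other root of $y^{2}-2cy+f^{n}$, whereas $\langle\sigma\rangle$ fixes $\C(Y)(x^{2n})$ pointwise), so $|\langle\sigma,\tau\rangle|=4n$. (iv) Conclude by Artin's theorem that $K/\C(Y)$ is Galois with group $\langle\sigma,\tau\rangle$, which the relations in (iii) identify with $G_{2n,n,2,-1}$; the action is then the asserted one.

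The delicate point --- and the only genuine obstacle --- is step (i): the dicyclic action lives on $K$ precisely when $\sqrt{f}\in K$, that is, when $\C(Y)(\sqrt{f})=\C(Y)(\sqrt{c^{2}-f^{n}})$, and for a \emph{generic} irreducible polynomial of this shape $K/\C(Y)$ is not Galois of degree $4n$ at all --- already for $n=2$, where $G_{4,2,2,-1}$ is the quaternion group, this recovers the classical obstruction to embedding the biquadratic field $\C(Y)(\sqrt{f},\sqrt{c^{2}-f^{n}})$ into a $Q_{8}$-extension. The clean remedy, under which the construction above works verbatim, is to carry $d$ along as part of the data and state the polynomial as $x^{4n}-2cx^{2n}+c^{2}-d^{2}f$ with the standing relation $c^{2}-d^{2}f=f^{n}$; with that in place the rest of the argument is routine field-theoretic bookkeeping over the two Kummer layers, and the forward implication is likewise routine once the factorization and the Kummer descriptions are set up.
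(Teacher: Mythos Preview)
Your forward implication is exactly the paper's argument: the paper places the entire computation in the paragraph immediately preceding the corollary and then records the result, and your write-up reproduces that computation (the $\sigma$-invariance of $x\tau(x)$, the parity argument under $\overline{\tau}$ forcing $a=0$, the expression $g=c+dz$, the identity $f^{n}=c^{2}-d^{2}f$, and the elimination producing the quartic-in-$x^{2n}$ equation) step for step. The only addition is your explicit check that $d\neq0$ so that $z\in\C(Y)(x)$; the paper leaves this implicit.

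For the converse the paper offers nothing beyond the statement --- the corollary is presented as a summary of the preceding discussion, which treats only the forward direction, and the general converse recorded in the preceding Proposition already carried the extra datum $\alpha\in\C(Y)[x^{m}]$ that gets dropped here. Your analysis goes further than the paper and is, in fact, sharper: you correctly observe that mere irreducibility of $x^{4n}-2cx^{2n}+f^{n}$ over $\C(Y)$ does \emph{not} guarantee a $G_{2n,n,2,-1}$-action, because one also needs $\sqrt{f}$ to lie in the splitting field, i.e.\ $(c^{2}-f^{n})/f\in(\C(Y)^{\ast})^{2}$. Your proposed remedy --- retain $d$ in the data with the standing relation $c^{2}-d^{2}f=f^{n}$ --- is exactly what the forward direction produces and is the natural fix; with it your steps (ii)--(iv) go through verbatim. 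So your proposal is correct on the forward direction and matches the paper, and on the converse it identifies (and repairs) a genuine lacuna in the statement rather than in your own argument.
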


\end{document}